\numberwithin{equation}{section} 
\numberwithin{figure}{section} 
  \theoremstyle{plain}
  \newtheorem{thm}{Theorem}[section]
  \theoremstyle{definition}
  \newtheorem{defn}[thm]{Definition}
  \theoremstyle{remark}
  \newtheorem*{rem*}{Remark}
 \theoremstyle{definition}
  \newtheorem{example}[thm]{Example}
  \theoremstyle{plain}
  \newtheorem{lem}[thm]{Lemma}
  \theoremstyle{remark}
  \newtheorem{rem}[thm]{Remark}
  \theoremstyle{plain}
  \newtheorem{cor}[thm]{Corollary}
\begin{document}

\title{Spectral conditions on Lie and Jordan algebras of compact operators}

\author{Matthew Kennedy}

\address{Department of Pure Mathematics \\
 University of Waterloo \\
 Waterloo, Ontario, Canada N2L 3G1}

\email{m3kennedy@uwaterloo.ca}

\author{Heydar Radjavi}

\address{Department of Pure Mathematics \\
 University of Waterloo \\
 Waterloo, Ontario, Canada N2L 3G1}

\email{hradjavi@uwaterloo.ca}

\begin{abstract}
We investigate the properties of bounded operators which satisfy a
certain spectral additivity condition, and use our results to study
Lie and Jordan algebras of compact operators. We prove that these
algebras have nontrivial invariant subspaces when their elements have
sublinear or submultiplicative spectrum, and when they satisfy simple
trace conditions. In certain cases we show that these conditions imply
that the algebra is (simultaneously) triangularizable.
\end{abstract}

\keywords{Invariant subspaces, spectrum, compact operators, Lie algebras, Jordan
algebras}

\subjclass[2000]{Primary 47A10, 47A15; Secondary 47L70}

\maketitle

\section{Introduction}

Conditions on the spectrum of an operator have been studied for some
time, for instance in the work of Motzkin and Taussky \cite{mot},
who investigated pairs of $n\times n$ matrices $A$ and $B$ with
the property that for every scalar $\lambda$, the eigenvalues of
$A+\lambda B$ are, subject to a slight technical condition, linear
functions of the eigenvalues of $A$ and $B$. Specifically, they
required that that the eigenvalues $\{\alpha_{1},...,\alpha_{n}\}$
and $\{\beta_{1},...,\beta_{n}\}$, of $A$ and $B$ respectively,
could be expressed as ordered sets $\{\alpha_{1},...,\alpha_{n}\}$
and $\{\beta_{1},...,\beta_{n}\}$, such that for every scalar $\lambda$,
the eigenvalues of $A+\lambda B$ are precisely $\alpha_{i}+\lambda\beta_{i}$,
for $i=1,...,n$. Such matrices are said to have property $L$ (``L''
is for ``linear''). 

A pair of (simultaneously) triangularizable matrices clearly has property
$L$, and Motzkin and Taussky were interested in conditions under
which the converse of this was true. They showed that a finite group
of matrices, with every pair of elements in the group having property
$L$, is not just triangularizable, but diagonalizable. Somewhat later,
Wales \cite{wal}, Zassenhaus \cite{zas}, and Guralnick \cite{gur}
showed that a multiplicative semigroup of $n\times n$ matrices is
triangularizable under the same circumstances.

The following conditions are seemingly much weaker than property $L$.

\begin{defn}
A pair of bounded operators $A$ and $B$ on a Banach space are said
to have \emph{subadditive spectrum} if\[
\sigma(A+B)=\sigma(A)+\sigma(B),\]
where $\sigma(A)+\sigma(B)$ means the set of all $\alpha+\beta$,
with $\alpha$ and $\beta$ in $\sigma(A)$ and $\sigma(B)$ respectively.
Similarly, $A$ and $B$ are said to have \emph{sublinear spectrum}
if \[
\sigma(A+\lambda B)\subseteq\sigma(A)+\lambda\sigma(B)\]
for every complex number $\lambda$. A family $\mathcal{F}$ of bounded
operators on a Banach space is said to have subadditive (resp. sublinear)
spectrum if every pair of elements in $\mathcal{F}$ has subadditive
(resp. sublinear) spectrum.
\end{defn}
\begin{rem*}
Note that for linear spaces, and in particular for Lie and Jordan
algebras, subadditivity of the spectrum clearly implies sublinearity.
\end{rem*}
The conditions of sublinear and subadditive spectrum are, in some
sense, even weaker than one might initially suspect. Indeed, as the
following example illustrates, there is little hope of obtaining any
results about the existence of invariant subspaces for an arbitrary
family of operators with subadditive or even sublinear spectrum without
imposing a great deal of additional structure.

\begin{example}
Consider the matrices\[
A=\left(\begin{array}{ccc}
0 & 1 & 0\\
0 & 0 & -1\\
0 & 0 & 0\end{array}\right),\quad B=\left(\begin{array}{ccc}
0 & 0 & 0\\
1 & 0 & 0\\
0 & 1 & 0\end{array}\right).\]
It is easy to verify that the linear space $\mathcal{S}$ spanned
by $A$ and $B$ consists entirely of nilpotent matrices, so $\mathcal{S}$
has sublinear spectrum, yet $\mathcal{S}$ clearly has no nontrivial
invariant subspaces.
\end{example}
It was shown in \cite{rad3} that the property of sublinear spectrum
implies the triangularizability of a semigroup of compact operators,
which is an extension of the classical results mentioned above to
the context of an infinite-dimensional Banach space.

A natural question is whether this kind of result holds for other
types of algebraic structures. In this paper we study Lie and Jordan
algebras of compact operators satisfying spectral conditions like
sublinearity and submultiplicativity, which have been shown to imply
the existence of invariant subspaces for semigroups of compact operators.
We will prove that many of the results obtained for semigroups also
hold in this non-associative context.

Shulman and Turovskii obtained several results of this nature in their
development of a radical theory for Lie algebras of compact operators
\cite{shu2}. They showed that a Lie algebra of compact operators
with subadditive spectrum has invariant subspaces \cite{shu2}, and
it was implicit in their proof that this condition actually implied
triangularizability. This gave an extension of Engel's Theorem for
Lie algebras of compact operators, which was also proven recently
in \cite{shu1}.

On the other hand, results on the existence of invariant subspaces
for Jordan algebras of compact operators have only recently been obtained.
In particular, there is currently no Jordan analogue of the theory
which was developed in \cite{shu2}. This is one reason why our results
have been developed in a different manner.

We begin by analyzing the properties of bounded linear operators which
satisfy a certain spectral additivity condition. The results we obtain
here, when combined with new ``Cartan-like'' conditions for the existence
of invariant subspaces in Lie and Jordan algebras of compact operators,
provides us with a general method of proving the existence of invariant
subspaces for these algebras, in particular when their elements have
subadditive or submultiplicative spectrum. We will show that this
implies a Lie or Jordan algebra of compact operators with subadditive
spectrum is triangularizable, obtaining a new proof of Shulman and
Turovskii's result for Lie algebras. Finally, we obtain simple conditions
on the trace of the finite-rank elements in the algebra which imply
the existence of invariant subspaces. As far as we know, some of our
results are new even in finite dimensions.

\section{Preliminaries}

In this paper we confine ourselves to the field $\mathbb{C}$ of complex
numbers. For a Banach space $\mathcal{X}$, we let $\mathcal{B}(\mathcal{X})$
and $\mathcal{K}(\mathcal{X})$ denote the algebras of all bounded
and compact operators on $\mathcal{X}$ respectively.

For $A$ in $\mathcal{B}(\mathcal{X})$, we let $\sigma(A)$ and $r(A)$
denote the spectrum and spectral radius of $A$ respectively. If $A$
is of finite rank, then the trace of $A$ is well-defined; we denote
it by $\operatorname{tr}(A)$ .

To clarify our exposition, we will sometimes make use of shorthand
notation. For example, if $\mathcal{S}_{1}$ and $\mathcal{S}_{2}$
are subsets of $\mathcal{B}(\mathcal{X})$, then we will write $\mathcal{S}_{1}^{n}$
for the linear span of $\{A^{n}:A\in\mathcal{S}_{1}$\}, and $\mathcal{S}_{1}\mathcal{S}_{2}$
for the linear span of $\{AB:A\in\mathcal{S}_{1},B\in\mathcal{S}_{2}\}$.

A family $\mathcal{F}$ of operators is called reducible if there
is a nontrivial closed subspace invariant under every member of $\mathcal{F}$.
We say that $\mathcal{F}$ is triangularizable if its lattice of invariant
subspaces contains a maximal subspace chain $\mathcal{C}$. (If $\mathcal{M}_{1}$
and $\mathcal{M}_{2}$ are two members of $\mathcal{C}$ with $\mathcal{M}_{1}\subseteq\mathcal{M}_{2}$
and no other member between $\mathcal{M}_{1}$ and $\mathcal{M}_{2}$,
then $\operatorname{dim}(\mathcal{M}_{1}\ominus\mathcal{M}_{2})\leq1$.) 

To establish the triangularizability of a family of operators, we
will require the following lemma from \cite[Lemma 7.1.11]{rad1}.

\begin{lem}
[The Triangularization Lemma]Let $\mathcal{P}$ be a property of
a family of operators such that
\begin{enumerate}
\item every family of operators with property $\mathcal{P}$ is reducible,
and
\item \label{enu:inher-by-quot}if $\mathcal{F}$ has property $\mathcal{P}$
and if $\mathcal{M}_{1}$ and $\mathcal{M}_{2}$ are invariant subspaces
of $\mathcal{F}$ with $\mathcal{M}_{1}\subseteq\mathcal{M}_{2}$,
then $\hat{\mathcal{F}}$ has property $\mathcal{P}$, where $\hat{\mathcal{F}}$
is the set of all quotient operators on $\mathcal{M}_{1}/\mathcal{M}_{2}$
induced by $\mathcal{F}$. 
\end{enumerate}
Then every family of operators with property $\mathcal{P}$ is triangularizable.
\end{lem}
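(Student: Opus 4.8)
The plan is to apply Zorn's Lemma to produce a maximal chain of closed invariant subspaces and then to use the two hypotheses to show that this chain is a triangularizing chain, that is, that all of its gaps have dimension at most one. Throughout I would work with the lattice of closed $\mathcal{F}$-invariant subspaces of $\mathcal{X}$.

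First I would consider the collection $\Omega$ of all chains (families totally ordered by inclusion) of closed $\mathcal{F}$-invariant subspaces of $\mathcal{X}$, partially ordered by set inclusion. The union of a tower in $\Omega$ is again a chain of closed invariant subspaces, so Zorn's Lemma yields a maximal element $\mathcal{C}\in\Omega$. The next step is to verify that $\mathcal{C}$ is \emph{complete}: it contains $\{0\}$ and $\mathcal{X}$, and it is closed under arbitrary intersections and closed linear spans. Indeed, for any subfamily of $\mathcal{C}$, both its intersection and the closure of its span are again closed and $\mathcal{F}$-invariant, and each is comparable with every member of $\mathcal{C}$; adjoining either one would produce a chain containing $\mathcal{C}$, so by maximality it must already lie in $\mathcal{C}$.

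With completeness established, I would analyze the gaps of $\mathcal{C}$. For each $\mathcal{M}\in\mathcal{C}$ set $\mathcal{M}_{-}=\overline{\bigvee\{\mathcal{N}\in\mathcal{C}:\mathcal{N}\subsetneq\mathcal{M}\}}$, which lies in $\mathcal{C}$ by completeness; a gap is a pair $(\mathcal{M}_{-},\mathcal{M})$ with $\mathcal{M}_{-}\subsetneq\mathcal{M}$. The heart of the argument is to show that every gap satisfies $\dim(\mathcal{M}/\mathcal{M}_{-})\leq1$. Suppose instead that $\dim(\mathcal{M}/\mathcal{M}_{-})\geq2$ for some gap. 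By hypothesis (2) the family $\hat{\mathcal{F}}$ of quotient operators induced on $\mathcal{M}/\mathcal{M}_{-}$ has property $\mathcal{P}$, so by hypothesis (1) it is reducible and admits a nontrivial closed invariant subspace. Pulling this subspace back through the quotient map $\mathcal{M}\to\mathcal{M}/\mathcal{M}_{-}$ produces a closed $\mathcal{F}$-invariant subspace $\mathcal{N}$ with $\mathcal{M}_{-}\subsetneq\mathcal{N}\subsetneq\mathcal{M}$. Since nothing in $\mathcal{C}$ lies strictly between $\mathcal{M}_{-}$ and $\mathcal{M}$, the subspace $\mathcal{N}$ is comparable with every member of $\mathcal{C}$, so $\mathcal{C}\cup\{\mathcal{N}\}$ is a strictly larger chain, contradicting the maximality of $\mathcal{C}$. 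Hence every gap has dimension at most one, and $\mathcal{C}$ is the desired maximal subspace chain, so $\mathcal{F}$ is triangularizable.

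The main obstacle I anticipate is not the gap argument itself but the bookkeeping required to make it rigorous: one must check that the quotient construction of hypothesis (2) yields an inclusion-preserving bijection between the closed $\mathcal{F}$-invariant subspaces lying between $\mathcal{M}_{-}$ and $\mathcal{M}$ and the closed $\hat{\mathcal{F}}$-invariant subspaces of $\mathcal{M}/\mathcal{M}_{-}$, and that the pulled-back subspace $\mathcal{N}$ is genuinely closed and strictly intermediate. One must also take care that reducibility (hypothesis (1)) is invoked only for gap quotients of dimension at least two, since a one-dimensional gap is already admissible and requires no appeal to (1).
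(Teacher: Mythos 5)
Your proposal is correct: the paper does not prove this lemma itself but quotes it from \cite[Lemma 7.1.11]{rad1}, and the argument given there is essentially the one you describe --- Zorn's Lemma applied to chains of closed invariant subspaces, completeness of the resulting maximal chain, and the gap-quotient argument using hypotheses (1) and (2) to rule out gaps of dimension two or more. Your attention to the one-dimensional-gap case and to the pullback of the invariant subspace through the quotient map covers the only delicate points.
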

If $\mathcal{P}$ is a property of a family of operators which satisfies
hypothesis \ref{enu:inher-by-quot} of the Triangularization Lemma,
then we say that $\mathcal{P}$ is inherited by quotients.

The following result from \textbf{\cite[Corollary 8.4.2]{rad1}} will
be used in combination with the Triangularization Lemma.

\begin{lem}
\label{lem:sub-spr-quo} The property of sublinear spectrum is inherited
by quotients.
\end{lem}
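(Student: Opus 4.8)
The plan is to verify the defining pairwise inclusion for the induced family $\hat{\mathcal{F}}$: for each pair $A,B\in\mathcal{F}$ and each scalar $\lambda$ I must show $\sigma(\hat A+\lambda\hat B)\subseteq\sigma(\hat A)+\lambda\sigma(\hat B)$, where a hat denotes the operator induced on $\mathcal{M}_2/\mathcal{M}_1$. Since passage to a quotient is linear, $\hat A+\lambda\hat B=\widehat{A+\lambda B}$, so only spectra of quotient operators are in play. I would first split the general ``sandwich'' $\mathcal{M}_1\subseteq\mathcal{M}_2$ into two elementary moves, restriction to the invariant subspace $\mathcal{M}_2$ followed by passage to the quotient by $\mathcal{M}_1$, and then observe that these two moves are interchanged by duality. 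Indeed, if $\mathcal{M}$ is invariant under $A$, then $\mathcal{M}^{\perp}$ is invariant under $A^{*}$, the adjoint of the quotient operator on $\mathcal{X}/\mathcal{M}$ is isometrically the restriction $A^{*}|_{\mathcal{M}^{\perp}}$, and $\sigma(A^{*}|_{\mathcal{M}^{\perp}})=\sigma(\hat A)$; since $\sigma(A^{*}+\lambda B^{*})=\sigma(A+\lambda B)$, the pair $A^{*},B^{*}$ again has sublinear spectrum. Thus it suffices to prove that sublinear spectrum is inherited by restriction to a common invariant subspace, and the quotient case will follow by passing to adjoints.

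So fix a common invariant subspace $\mathcal{N}$ and set $A_{1}=A|_{\mathcal{N}}$, $B_{1}=B|_{\mathcal{N}}$; the goal is $\sigma(A_{1}+\lambda B_{1})\subseteq\sigma(A_{1})+\lambda\sigma(B_{1})$ for every $\lambda$. Since $\sigma(A_{1})$ and $\sigma(B_{1})$ are compact, the target $\sigma(A_{1})+\lambda\sigma(B_{1})$ is closed. The essential case is that of boundary points: a point of $\sigma(A_{1}+\lambda B_{1})$ lying outside the target and on $\partial\sigma(A_{1}+\lambda B_{1})$ yields the desired containment by contradiction directly, while a point in the interior can be pushed to the boundary along the component of the complement of the target in which it lies, the only delicate situation being when $\sigma(A_{1}+\lambda B_{1})$ fills an entire bounded complementary component of the target. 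That situation I would exclude by an analyticity argument on the resolvent $(A_{1}+\lambda B_{1}-\mu)^{-1}$. It therefore suffices to show $\partial\sigma(A_{1}+\lambda B_{1})\subseteq\sigma(A_{1})+\lambda\sigma(B_{1})$.

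The spectral inputs are standard: $\partial\sigma(T)\subseteq\sigma_{\mathrm{ap}}(T)$ for any bounded $T$, and $\sigma_{\mathrm{ap}}(A_{1}+\lambda B_{1})\subseteq\sigma_{\mathrm{ap}}(A+\lambda B)$ because an approximate eigenvector lying in the invariant subspace $\mathcal{N}$ serves equally well on all of $\mathcal{X}$. Combining these with sublinearity of the original pair gives, for $\mu\in\partial\sigma(A_{1}+\lambda B_{1})$, a sequence of unit vectors $x_{n}\in\mathcal{N}$ with $(A+\lambda B-\mu)x_{n}\to 0$, together with a decomposition $\mu=\alpha+\lambda\beta$ for some $\alpha\in\sigma(A)$ and $\beta\in\sigma(B)$. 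What remains is to produce such a decomposition with $\alpha\in\sigma(A_{1})$ and $\beta\in\sigma(B_{1})$, that is, to \emph{localize} the approximate eigenvector $x_{n}$ into separate approximate spectral data for $A_{1}$ and $B_{1}$.

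This localization is the main obstacle. The naive inclusions place $\alpha$ and $\beta$ only in the full spectra $\sigma(A),\sigma(B)$, and pushing them down into the possibly smaller spectra $\sigma(A_{1}),\sigma(B_{1})$ is exactly the point at which one must exploit sublinearity of the entire pencil $z\mapsto A+zB$ rather than its single value at $z=\lambda$. I would attack this by analyzing the analytic pencil directly: near a boundary value the relevant approximate eigenspaces should vary analytically in $z$, and a Riesz spectral projection, or a normal-families argument applied to the resolvents of $A_{1}+zB_{1}$ for $z$ near $\lambda$, ought to split the vectors $x_{n}$ so as to realize $\alpha$ and $\beta$ as genuine elements of $\sigma(A_{1})$ and $\sigma(B_{1})$. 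I expect the technical heart of the proof to lie entirely in making this splitting rigorous in the Banach-space setting, where the absence of orthogonality rules out the direct extraction of Rayleigh-type quotients.
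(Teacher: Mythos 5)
Your preliminary reductions are all sound: passing from the sandwich $\mathcal{M}_1\subseteq\mathcal{M}_2$ to a restriction plus a quotient, trading the quotient for a restriction of the adjoints via $(\mathcal{X}/\mathcal{M})^{*}\cong\mathcal{M}^{\perp}$, and the chain $\partial\sigma(T)\subseteq\sigma_{\mathrm{ap}}(T)$ together with $\sigma_{\mathrm{ap}}(A_1+\lambda B_1)\subseteq\sigma_{\mathrm{ap}}(A+\lambda B)$. But all of that only yields what you yourself concede is the easy conclusion, namely $\mu=\alpha+\lambda\beta$ with $\alpha\in\sigma(A)$ and $\beta\in\sigma(B)$. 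The entire content of the lemma is the localization to $\alpha\in\sigma(A_1)$, $\beta\in\sigma(B_1)$, and at exactly that point the proposal stops being a proof: ``a Riesz spectral projection, or a normal-families argument \ldots{} ought to split the vectors $x_n$'' is a hope, not an argument, and the specific hope is misplaced. An approximate eigenvector of $A+\lambda B$ carries no separate approximate spectral data for the non-commuting operators $A$ and $B$ individually --- if it did, sublinearity would be automatic rather than a restrictive hypothesis. So there is a genuine gap, and it sits precisely where you flagged it.

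Two further remarks. First, you never use compactness, and without it the very first inclusion is unavailable: for general bounded operators $\sigma(T|_{\mathcal{N}})$ need not be contained in $\sigma(T)$ (the bilateral shift restricted to $H^{2}$ is the unilateral shift), so the lemma is genuinely a statement about compact operators, which is the setting of the source the paper cites for it, \cite[Corollary 8.4.2]{rad1}. For compact operators every nonzero spectral point is an eigenvalue of finite multiplicity and all the spectra in sight are countable, and the working argument exploits exactly this: a nonzero eigenvalue $\mu_0$ of the compressed pencil $A_1+\lambda_0 B_1$ extends, via the Riesz projection, to an analytic (algebraic) branch $\mu(\lambda)$ of eigenvalues; since $\mu(\lambda)$ lies in the countable union of affine functions $\lambda\mapsto\alpha+\lambda\beta$ with $\alpha\in\sigma(A)$, $\beta\in\sigma(B)$, it must coincide with one of them on a neighbourhood, and degenerating the pencil at $\lambda\to0$ and at $\lambda\to\infty$ (after rescaling by $\lambda^{-1}$) then forces $\alpha\in\sigma(A_1)$ and $\beta\in\sigma(B_1)$. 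That mechanism --- sublinearity of the whole pencil plus countability of the spectra plus genuine eigenvalues, not approximate ones --- is what your sketch is missing; your instinct to use analyticity in $z$ and Riesz projections is pointed at the wrong object (decomposing eigenvectors) rather than the right one (identifying eigenvalue branches with affine functions and following them to the degenerate endpoints). Second, the value $\mu=0$ needs a separate word when $\mathcal{M}_2/\mathcal{M}_1$ is finite dimensional, since the block-triangular spectral identities for compact operators hold only modulo $\{0\}$.
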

\begin{defn}
A pair of bounded operators $A$ and $B$ on a Banach space are said
to have \emph{submultiplicative spectrum} if\[
\sigma(AB)\subseteq\sigma(A)\sigma(B),\]
where $\sigma(A)\sigma(B)$ means the set of all $\alpha\beta$, with
$\alpha$ and $\beta$ in $\sigma(A)$ and $\sigma(B)$ respectively.
A family $\mathcal{F}$ of bounded operators on a Banach space are
said to have submultiplicative spectrum if every pair of elements
in $\mathcal{F}$ has submultiplicative spectrum.
\end{defn}
The property of submultiplicative specrum is not strong enough to
imply even the reducibility of a semigroup of compact operators; indeed,
the existence of finite irreducible matrix groups with sublinear specrum
was shown in \cite{rad2}. We will be able to show however, that Lie
and Jordan algebras of compact operators with submultiplicative spectrum
are reducible.

\section{\label{sec:rigidity}Operators with Stable Spectrum}

It turns out that the following property, obviously closely related
to the property of sublinear spectrum, is of particular importance
for obtaining many of our reducibility results.

\begin{defn}
Let $T$ be a bounded operator on a Banach space. We say that a bounded
operator $A$ has $T$\emph{-stable spectrum} if\[
r(A+\lambda T)\leq r(A)\]
for every complex number $\lambda$. A family of bounded operators
on a Banach space is said to have $T$-stable spectrum if each of
its elements has $T$-stable spectrum.
\end{defn}
\begin{rem*}
It will sometimes be useful to reference a family of bounded operators
with $T$-stable spectrum without making explicit mention of $T$.
Therefore, we will say that a family of bounded operators has stable
spectrum if it has $T$-stable spectrum, for some nonzero $T$.
\end{rem*}
\begin{example}
\label{exa:pre-example}Consider the matrices \[
A=\left(\begin{array}{ccc}
-1 & 0 & 0\\
0 & 0 & 1\\
0 & 0 & 0\end{array}\right),\quad B=\left(\begin{array}{ccc}
0 & -1 & -1\\
0 & -1 & -1\\
1 & 0 & 1\end{array}\right).\]
Note that $A$ is in Jordan normal form, so $\sigma(A)=\{-1,0\}$,
and $B^{3}=0$, so $\sigma(B)=\{0\}$. The characteristic polynomial
of $A+\lambda B$ is $t^{3}-t^{2}$, which implies that $\sigma(A+\lambda B)=\sigma(A)$
for all $\lambda$in $\mathbb{C}$. This shows that $A$ is $B$-stable.

Calculating the inverse of $\mu-A$, we have \[
(\mu-A)^{-1}B=\frac{1}{\mu^{2}(1-\mu)}\left(\begin{array}{ccc}
0 & \mu^{2} & \mu^{2}\\
-1 & -\mu(1-\mu) & -(1-\mu)^{2}\\
\mu(1-\mu) & 0 & \mu(1-\mu)\end{array}\right),\]
and it is routine to verify that this matrix is nilpotent. Calculating
the inverse of $1-\mu B$, we have\[
(1-\mu B)^{-1}A=\left(\begin{array}{ccc}
0 & 0 & 0\\
-1 & 0 & -1\\
2 & 0 & 1\end{array}\right),\]
and this matrix has characteristic polynomial $t^{3}-t^{2}$, showing
that it has the same eigenvalues as $A$.
\end{example}
In this section, we will show that the results in Example \ref{exa:pre-example}
hold in general, which will be important for our main results. An
important tool will be the theory of subharmonic functions, based
on the result of Vesentini that if $f$ is an analytic function from
a domain of the complex numbers into a Banach algebra, then the functions
$\lambda\rightarrow r(f(\lambda))$ and $\lambda\rightarrow\operatorname{log}(r(f(\lambda)))$
are subharmonic \cite{ves}. We will require the following two fundamental
results from the theory of subharmonic functions (see for example
\cite[Theorem A.1.3]{aup} and \cite[Theorem A.1.29]{aup} resp.).

\begin{thm}
[Maximum Principle for Subharmonic Functions]Let $f$ be a subharmonic
function on a domain $D$ of $\mathbb{C}$. If there exists $\lambda_{0}$
in $D$ such that $f(\lambda)\le f(\lambda_{0})$ for all $\lambda$
in $D$, then $f(\lambda)=f(\lambda_{0})$ for all $\lambda$ in $D$.
\end{thm}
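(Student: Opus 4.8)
The plan is to exploit the connectedness of the domain $D$ through a standard open-and-closed argument. Writing $M = f(\lambda_{0}) = \sup_{\lambda \in D} f(\lambda)$, I would introduce the level set $E = \{\lambda \in D : f(\lambda) = M\}$ and aim to prove $E = D$. Since $E$ is nonempty (it contains $\lambda_{0}$) and $D$ is connected, it suffices to establish that $E$ is both relatively closed and relatively open in $D$.

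Closedness comes essentially for free from the definition of subharmonicity, which builds in upper semicontinuity of $f$. Indeed, upper semicontinuity makes every sublevel set $\{f < c\}$ open, so $\{f \ge M\}$ is closed; and since $f \le M$ throughout $D$ by hypothesis, we have $E = \{f \ge M\}$, which is therefore relatively closed in $D$.

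The substance of the argument lies in showing that $E$ is open, and here I would invoke the sub-mean-value inequality that defines subharmonic functions. Fixing $\lambda_{1} \in E$ and choosing $r > 0$ small enough that the closed disk of radius $r$ about $\lambda_{1}$ lies in $D$, subharmonicity gives $M = f(\lambda_{1}) \le \frac{1}{2\pi}\int_{0}^{2\pi} f(\lambda_{1} + re^{i\theta})\,d\theta \le M$, where the final inequality uses $f \le M$. Hence the two ends coincide and $\int_{0}^{2\pi}\bigl(M - f(\lambda_{1} + re^{i\theta})\bigr)\,d\theta = 0$.

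The delicate point — and the step I expect to be the main obstacle — is passing from this vanishing integral to the pointwise conclusion that $f = M$ on the entire circle, since $f$ need only be upper semicontinuous rather than continuous. I would handle this by noting that the integrand $\theta \mapsto M - f(\lambda_{1} + re^{i\theta})$ is nonnegative and, as the difference of a constant and an upper semicontinuous function, is lower semicontinuous. A nonnegative lower semicontinuous function whose integral vanishes must be identically zero: were it positive at some $\theta^{*}$, lower semicontinuity would bound it below by a positive constant on a neighborhood of $\theta^{*}$, forcing a strictly positive integral. Consequently $f \equiv M$ on every sufficiently small circle about $\lambda_{1}$, hence on a whole disk, so $\lambda_{1}$ is interior to $E$ and $E$ is open. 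Connectedness of $D$ then yields $E = D$, which is exactly the assertion.
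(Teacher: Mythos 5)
Your argument is correct, and it is the standard proof of this classical fact. Note that the paper itself offers no proof at all: this theorem is quoted as background material with a citation to Aupetit's \emph{A Primer on Spectral Theory} (Theorem A.1.3), so there is no in-paper argument to compare against. Your open-and-closed decomposition of the level set $\{f = M\}$, with closedness from upper semicontinuity and openness from the sub-mean-value inequality combined with the observation that a nonnegative lower semicontinuous function of vanishing integral is identically zero, is exactly the textbook route; in particular you correctly isolate and resolve the one genuinely delicate step, namely that mere upper semicontinuity (rather than continuity) suffices to pass from the vanishing circular average to pointwise equality. The only case your write-up leaves implicit is $f(\lambda_{0})=-\infty$, where the quantity $M-f$ is not meaningful; but then $f\equiv-\infty$ trivially (and most definitions of subharmonicity exclude this), so nothing is lost.
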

We state here only a special case of H. Cartan's Theorem.

\begin{thm}
[H. Cartan's Theorem]Let $f$ be a subharmonic function from a domain
$D$ of $\mathbb{C}$. If $f(\lambda)=-\infty$ on a nonempty open
open ball in $D$, then $f(\lambda)=-\infty$ for all $\lambda$ in
$D$.
\end{thm}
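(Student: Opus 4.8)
The plan is to obtain this special case of Cartan's theorem from the basic dichotomy that a subharmonic function on a connected domain is either identically $-\infty$ or locally integrable; the hypothesis that $f\equiv-\infty$ on a ball of positive measure will then rule out the second alternative. Recall that subharmonicity means $f$ is upper semicontinuous with values in $[-\infty,+\infty)$ and satisfies the sub-mean-value inequality on every closed disc $\overline{B(z_{0},r)}\subseteq D$. Integrating the circular inequality against $r\,dr$ produces the solid form
\[
f(z_{0})\le\frac{1}{\pi r^{2}}\int_{B(z_{0},r)}f\,dA,
\]
which is the engine of the whole argument.

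First I would record that finiteness of $f$ at a single point forces integrability near it: if $f(z_{0})>-\infty$, then the solid inequality bounds $\int_{B(z_{0},r)}f\,dA$ below by a finite number, while upper semicontinuity bounds $f$ above on the compact disc, so $f\in L^{1}(B(z_{0},r))$ for all small $r$. Let $G$ denote the open set of points of $D$ admitting a neighborhood on which $f$ is integrable; by the preceding remark, every point at which $f$ is finite lies in $G$.

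The crux is to show that $G$ is also closed in $D$, after which connectedness of $D$ gives the dichotomy $G=\emptyset$ or $G=D$. Given $z_{1}\in D\cap\overline{G}$, I would fix $R$ with $\overline{B(z_{1},2R)}\subseteq D$ and choose $z_{2}\in G\cap B(z_{1},R/2)$; since $f$ is integrable and hence not almost everywhere $-\infty$ near $z_{2}$, there is a point $z_{3}\in B(z_{1},R)$ with $f(z_{3})>-\infty$. Applying the solid inequality on the disc $B(z_{3},R)$, which lies in $D$ and contains $z_{1}$, shows $f\in L^{1}(B(z_{3},R))$, so $z_{1}\in G$. I expect this propagation-of-integrability step to be the main obstacle, since it is where the averaging inequality and the geometry of $D$ must be combined carefully to move integrability from an interior point to a boundary point of $G$.

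With the dichotomy established the conclusion follows at once. If $G=\emptyset$ then $f$ is finite at no point of $D$, that is, $f\equiv-\infty$; if $G=D$ then $f\in L^{1}_{\mathrm{loc}}(D)$ and so $\{f=-\infty\}$ is a Lebesgue-null set. Since by hypothesis $f=-\infty$ on a ball of positive measure, the case $G=D$ is impossible, forcing $G=\emptyset$ and hence $f\equiv-\infty$ throughout $D$.
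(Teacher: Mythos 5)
Your proof is correct, but note that the paper does not prove this statement at all: it is quoted as a known special case of H.\ Cartan's theorem, with a citation to Aupetit's \emph{A Primer on Spectral Theory} (Theorem A.1.29), so there is no in-paper argument to compare against. What you have written is a self-contained version of the standard potential-theoretic proof: integrate the circular sub-mean-value inequality to get the solid (area) form $f(z_{0})\le\frac{1}{\pi r^{2}}\int_{B(z_{0},r)}f\,dA$, deduce that finiteness of $f$ at a point forces local integrability nearby (upper semicontinuity bounds $f$ above on compacta, the area inequality bounds the integral below), show the set $G$ of points of local integrability is open and closed in $D$ via the propagation step with the discs $B(z_{1},2R)$ and $B(z_{3},R)$, and conclude by connectedness that either $f\equiv-\infty$ or $f\in L^{1}_{\mathrm{loc}}(D)$; the latter is incompatible with $f=-\infty$ on a set of positive measure. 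The geometry in the closedness step checks out ($|z_{3}-z_{1}|<R$ guarantees $z_{1}\in B(z_{3},R)$ and $\overline{B(z_{3},R)}\subseteq\overline{B(z_{1},2R)}\subseteq D$), and your use of the area inequality at the non-small radius $R$ is justified because you assumed the sub-mean-value property on every closed disc contained in $D$. This buys the reader a proof from first principles of a fact the paper simply imports, at the cost of about a page of classical potential theory; it is exactly the argument one finds in standard references, so there is nothing to object to beyond the observation that the authors chose to cite rather than prove.
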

\begin{rem}
\label{rem:const-sp-rad}For bounded operators $A$ and $T$ on a
Banach space, the function $\lambda\to A+\lambda T$ is analytic,
so by Vesentini's results, the functions $\lambda\to r(A+\lambda T)$
and $\lambda\to\operatorname{log}(r(A+\lambda T))$ are subharmonic.
If $A$ has $T$-stable spectrum, the Maximum Principle for subharmonic
functions immediately implies that $r(A+\lambda T)=r(A)$ for all
complex numbers$\lambda$.

If $A$ and $T$ have sublinear spectrum and $T$ is quasinilpotent,
then $A$ has $T$-stable spectrum. The following lemma from \cite[Lemma 4.2]{shu2})
shows that the quasinilpotence of $T$ is a necessary condition for
$A$ to be $T$-stable. We provide the proof here for the convenience
of the reader, and as a warmup for further applications of subharmonicity.
\end{rem}
\begin{lem}
\label{lem:stable-quasinilpotence}Let $A$ and $T$ be bounded operators
on a Banach space. If $A$ has $T$-stable spectrum, then $T$ is
quasinilpotent. 
\end{lem}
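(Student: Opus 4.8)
The plan is to prove $r(T)=0$ by reinterpreting the hypothesis as a growth bound on a single subharmonic function and then extracting its value at the origin via the sub-mean value inequality. Write $\rho=r(A)$; the goal is to show $r(T)\le 0$.

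First I would exploit the homogeneity of the spectral radius together with the substitution $\mu=1/\lambda$. For $\lambda\ne 0$ we have $A+\lambda T=\lambda(T+\mu A)$ with $\mu=1/\lambda$, so that $r(A+\lambda T)=|\lambda|\,r(T+\mu A)$. I would then set $\psi(\mu)=r(T+\mu A)$, which by Vesentini's result is subharmonic on all of $\mathbb{C}$, since $\mu\mapsto T+\mu A$ is analytic. The $T$-stability hypothesis $r(A+\lambda T)\le\rho$ now reads $|\lambda|\,\psi(\mu)\le\rho$, that is, $\psi(\mu)\le\rho|\mu|$ for every $\mu\ne 0$. (Invoking the Maximum Principle as in Remark \ref{rem:const-sp-rad} would in fact upgrade this to equality, but the inequality is all that is needed here.)

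Next I would apply the defining sub-mean value inequality of the subharmonic function $\psi$ at the origin: for every sufficiently small $r>0$,
\[
\psi(0)\le\frac{1}{2\pi}\int_0^{2\pi}\psi(re^{i\theta})\,d\theta\le\frac{1}{2\pi}\int_0^{2\pi}\rho r\,d\theta=\rho r.
\]
Letting $r\to 0^{+}$ yields $\psi(0)\le 0$. Since $\psi(0)=r(T)\ge 0$, this forces $r(T)=0$, so $T$ is quasinilpotent, as desired.

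The step I expect to be the genuine obstacle is exactly the passage to the ``value at infinity.'' The naive route is to observe that $\psi(\mu)\le\rho|\mu|\to 0$ as $\mu\to 0$ and to try to identify the limit with $r(T)$; but the spectral radius is only \emph{upper} semicontinuous, so a direct limit argument produces $r(T)\ge 0$, the wrong direction, and yields nothing. The content of the proof is that subharmonicity supplies precisely the missing control at $\mu=0$ through the sub-mean value property, converting the boundary bound $\psi(\mu)\le\rho|\mu|$ into the bound $\psi(0)\le 0$.
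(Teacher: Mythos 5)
Your proof is correct and follows essentially the same route as the paper: both pass to the subharmonic function $\mu\mapsto r(T+\mu A)$, use the stability hypothesis to bound it by $r(A)|\mu|$ away from the origin, and conclude $r(T)=0$ from its behaviour at $\mu=0$. The only difference is cosmetic: the paper compresses the final step into the assertion $r(T)=\limsup_{\lambda\to\infty}r(\lambda^{-1}A+T)$ ``by subharmonicity,'' whereas you make the underlying sub-mean value inequality explicit (and correctly observe that the Maximum Principle upgrade to equality is not needed).
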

\begin{proof}
By Remark \ref{rem:const-sp-rad}, $r(A+\lambda T)=r(A)$ for all
$\lambda$in $\mathbb{C}$, so \[
r(\lambda^{-1}A+T)=|\lambda|^{-1}r(A)\]
for all nonzero $\lambda$ in $\mathbb{C}$. Thus, by the subharmonicity
of $r(\lambda^{-1}A+T)$,\[
r(T)=\lim\sup_{\lambda\rightarrow\infty}r(\lambda^{-1}A+T)=0.\]

\end{proof}
\begin{lem}
\label{lem:stable-inverse-quasinilpotent}Let $A$ and $T$ be bounded
operators on a Banach space. Then $A$ has $T$-stable spectrum if
and only if $(\mu-A)^{-1}T$ is quasinilpotent for all $\mu\notin\sigma(A)$. 
\end{lem}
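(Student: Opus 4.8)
The plan is to prove the equivalence by relating the spectral radius of $A+\lambda T$ to the spectral properties of the analytic family $(\mu-A)^{-1}T$. First I would recall the standard resolvent identity: for $\mu\notin\sigma(A)$, one can factor
\[
\mu - (A+\lambda T) = (\mu - A)\bigl(I - \lambda(\mu-A)^{-1}T\bigr),
\]
so that $\mu\in\sigma(A+\lambda T)$ if and only if $1/\lambda\in\sigma\bigl((\mu-A)^{-1}T\bigr)$ (for $\lambda\neq 0$, with the obvious care at $\lambda=0$). Thus the resolvent set of $A+\lambda T$ at the point $\mu$ is governed entirely by the spectrum of the compact-resolvent product $(\mu-A)^{-1}T$.

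For the ``if'' direction, suppose $(\mu-A)^{-1}T$ is quasinilpotent for every $\mu\notin\sigma(A)$. I want to show $r(A+\lambda T)\le r(A)$ for all $\lambda$. Fix $\mu$ with $|\mu|>r(A)$, so $\mu\notin\sigma(A)$; by the factorization above, $\mu\in\sigma(A+\lambda T)$ would force $1/\lambda$ to be a nonzero element of $\sigma\bigl((\mu-A)^{-1}T\bigr)$, contradicting quasinilpotence. Hence every such $\mu$ lies in the resolvent set of $A+\lambda T$, giving $r(A+\lambda T)\le r(A)$. This handles both directions' ``easier'' inequality cleanly once the factorization is in hand.

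For the ``only if'' direction, assume $A$ has $T$-stable spectrum. By Remark~\ref{rem:const-sp-rad} we then have $r(A+\lambda T)=r(A)$ for \emph{all} $\lambda$. I would fix $\mu\notin\sigma(A)$ and consider the analytic function $\lambda\to(\mu-A)^{-1}T$ viewed through its spectral radius; my goal is to show this operator has spectral radius zero. The key point is that for $|\mu|$ large the factorization shows $(\mu-A)^{-1}T$ has no large spectrum, and subharmonicity of $\lambda\to\log r\bigl(I-\lambda(\mu-A)^{-1}T\bigr)$ together with the constancy $r(A+\lambda T)=r(A)$ should force the nonzero spectrum of $(\mu-A)^{-1}T$ to be empty. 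Concretely, if $(\mu-A)^{-1}T$ had a nonzero spectral point $1/\lambda_0$, then $\mu$ would belong to $\sigma(A+\lambda_0 T)$, which is consistent with $r(A+\lambda_0 T)=r(A)$ only when $|\mu|\le r(A)$; choosing $\mu$ outside the closed disk of radius $r(A)$ but with $\mu\notin\sigma(A)$ yields the contradiction.

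The main obstacle I anticipate is the boundary case $|\mu|=r(A)$ and points $\mu\notin\sigma(A)$ lying on the circle of radius $r(A)$, where the strict inequality argument degenerates. To handle these I expect to invoke H.~Cartan's Theorem or a subharmonicity argument: the function $\mu\mapsto r\bigl((\mu-A)^{-1}T\bigr)$ (or its logarithm) is subharmonic on the resolvent set $\mathbb{C}\setminus\sigma(A)$, and if it vanishes on the unbounded component where $|\mu|>r(A)$ it must vanish identically on the whole connected component of the resolvent set by the identity/Cartan principle. The delicate step is verifying the subharmonicity and connectivity bookkeeping so that quasinilpotence propagates to \emph{every} $\mu\notin\sigma(A)$ rather than just to the large-$\mu$ region, so I would isolate that as the crux of the argument and lean on the Maximum Principle and Cartan's Theorem exactly as in the proof of Lemma~\ref{lem:stable-quasinilpotence}.
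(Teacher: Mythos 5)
Your proposal is correct and follows essentially the same route as the paper: the factorization $\mu-(A+\lambda T)=(\mu-A)\bigl(I-\lambda(\mu-A)^{-1}T\bigr)$, the Maximum Principle giving $r(A+\lambda T)=r(A)$, quasinilpotence of $(\mu-A)^{-1}T$ for $|\mu|>r(A)$, and H.~Cartan's Theorem applied to the subharmonic function $\mu\mapsto\log r\bigl((\mu-A)^{-1}T\bigr)$ to propagate quasinilpotence to all of $\mathbb{C}\setminus\sigma(A)$. The only difference is that you also spell out the easy ``if'' direction, which the paper's proof leaves implicit, and you correctly flag the component-by-component bookkeeping for Cartan's Theorem that the paper glosses over.
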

\begin{proof}
By remark \ref{rem:const-sp-rad}, $r(A+\lambda T)=r(A)$ for all
$\lambda$ in $\mathbb{C}$, so for $\mu$ in $\mathbb{C}$ with $|\mu|>r(A)$,
both $\mu-A$ and $\mu-A-\lambda T$ are invertible. Therefore,\[
\lambda^{-1}(\mu-A)^{-1}(\mu-A-\lambda T)=\lambda^{-1}-(\mu-A)^{-1}T\]
is invertible for all nonzero $\lambda$ in $\mathbb{C}$. This means
that the values of the operator-valued function $\mu\rightarrow(\mu-A)^{-1}T$,
which is analytic for $\mu\notin\sigma(A)$, are quasinilpotent whenever
$|\mu|>r(A)$.

Consider the subharmonic function $\mu\rightarrow\operatorname{log}(r((\mu-A)^{-1}T))$
defined for $\mu\notin\sigma(A)$. Since $\operatorname{log}(r((\mu-A)^{-1}T))=-\infty$
whenever $|\mu|>r(A)$, by H. Cartan's Theorem, $\operatorname{log}(r((\mu-A)^{-1}T))=-\infty$
for all $\mu\notin\sigma(A)$. In other words, $(\mu-A)^{-1}T$ is
quasinilpotent for all $\mu\notin\sigma(A)$.
\end{proof}
\begin{lem}
\label{lem:stable-spec-containment}Let $A$ and $T$ be bounded operators
on a Banach space. Then $A$ has $T$-stable spectrum if and only
if $\sigma(A+\lambda T)\subseteq\sigma(A)$ for all $\lambda$ in
$\mathbb{C}$.
\end{lem}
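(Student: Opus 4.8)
The plan is to prove the two implications separately, noting in advance that the forward direction is essentially trivial and that the reverse direction is where the content lies — though even there the heavy lifting has already been done in Lemma \ref{lem:stable-inverse-quasinilpotent}.

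First I would dispose of the easy direction. Suppose $\sigma(A+\lambda T)\subseteq\sigma(A)$ for every $\lambda$ in $\mathbb{C}$. Since the spectral radius is the supremum of the moduli of the points in the spectrum, a containment of spectra immediately yields $r(A+\lambda T)\le r(A)$, which is exactly the defining condition that $A$ have $T$-stable spectrum. No analysis is needed here.

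For the converse, suppose $A$ has $T$-stable spectrum. The strategy is to show the contrapositive containment by proving that every $\mu\notin\sigma(A)$ also lies outside $\sigma(A+\lambda T)$ for each $\lambda$. I would invoke Lemma \ref{lem:stable-inverse-quasinilpotent} to conclude that $(\mu-A)^{-1}T$ is quasinilpotent for every $\mu\notin\sigma(A)$. Fixing such a $\mu$ and an arbitrary $\lambda$, the key algebraic step is the factorization
\[
\mu-A-\lambda T=(\mu-A)\bigl(I-\lambda(\mu-A)^{-1}T\bigr).
\]
Because $(\mu-A)^{-1}T$ is quasinilpotent, so is the scalar multiple $\lambda(\mu-A)^{-1}T$, and hence its spectrum is $\{0\}$; this forces $1$ to lie outside the spectrum of $\lambda(\mu-A)^{-1}T$, so the second factor $I-\lambda(\mu-A)^{-1}T$ is invertible. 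As $\mu-A$ is invertible by the choice of $\mu$, the product is invertible, giving $\mu\notin\sigma(A+\lambda T)$. Since $\lambda$ was arbitrary, this establishes $\sigma(A+\lambda T)\subseteq\sigma(A)$ for all $\lambda$.

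I do not expect any serious obstacle in this argument, since the subharmonicity and H.\ Cartan's Theorem that supply the quasinilpotence of $(\mu-A)^{-1}T$ have been absorbed into the preceding lemma. The only point requiring a moment of care is the invertibility of the second factor in the factorization, which I would justify precisely through the observation that a quasinilpotent operator has spectrum $\{0\}$, so that $I$ minus such an operator cannot be singular.
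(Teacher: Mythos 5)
Your proposal is correct and follows essentially the same route as the paper: both directions rest on Lemma \ref{lem:stable-inverse-quasinilpotent}, and your factorization $\mu-A-\lambda T=(\mu-A)\bigl(I-\lambda(\mu-A)^{-1}T\bigr)$ is just the direct form of the paper's contradiction argument via $\lambda^{-1}-(\mu-A)^{-1}T$. The only cosmetic difference is that you spell out the trivial reverse implication, which the paper leaves implicit.
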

\begin{proof}
Suppose $\mu\in\sigma(A+\lambda T)$, but that $\mu\notin\sigma(A)$
. Then clearly $\lambda$ is nonzero, and\[
\lambda^{-1}(\mu-A)^{-1}(\mu-A-\lambda T)=\lambda^{-1}-(\mu-A)^{-1}T\]
is not invertible. But by Theorem \ref{lem:stable-inverse-quasinilpotent},
$(\mu-A)^{-1}T$ is quasinilpotent for all $\mu\notin\sigma(A)$,
which gives a contradiction. 
\end{proof}
The next result also follows from \cite[Theorem 3.4.14]{aup}, but
it is interesting to see that it can be proved in the following way.

\begin{lem}
\label{lem:stable-const-spec}Let $A$ and $T$ be bounded operators
on a Banach space. If $A$ has $T$-stable spectrum and $\sigma(A)$
has no interior points, then $\sigma(A+\lambda T)=\sigma(A)$ for
all $\lambda$ in $\mathbb{C}$.
\end{lem}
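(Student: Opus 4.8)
The plan is to obtain the two inclusions $\sigma(A+\lambda T)\subseteq\sigma(A)$ and $\sigma(A)\subseteq\sigma(A+\lambda T)$ separately. The first is already available: since $A$ has $T$-stable spectrum, Lemma~\ref{lem:stable-spec-containment} gives $\sigma(A+\lambda T)\subseteq\sigma(A)$ for every $\lambda$. The whole content of the lemma therefore lies in the reverse inclusion, and the key observation I would exploit is that the reverse inclusion is simply the forward inclusion applied to a translated operator.

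More precisely, I would first verify that $A+\lambda_{0}T$ itself has $T$-stable spectrum for each fixed $\lambda_{0}$. This is where Remark~\ref{rem:const-sp-rad} does the work: because $A$ has $T$-stable spectrum, that remark tells us $r(A+\lambda T)=r(A)$ for all $\lambda$, so for any $s$ we have $r((A+\lambda_{0}T)+sT)=r(A+(\lambda_{0}+s)T)=r(A)=r(A+\lambda_{0}T)$. In particular $r((A+\lambda_{0}T)+sT)\le r(A+\lambda_{0}T)$ for all $s$, which is exactly the assertion that $A+\lambda_{0}T$ has $T$-stable spectrum.

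With this in hand I would apply Lemma~\ref{lem:stable-spec-containment} a second time, now to the operator $A+\lambda_{0}T$, to conclude $\sigma((A+\lambda_{0}T)+sT)\subseteq\sigma(A+\lambda_{0}T)$ for all $s$. Taking $s=-\lambda_{0}$ yields $\sigma(A)\subseteq\sigma(A+\lambda_{0}T)$. Combined with the first inclusion this gives $\sigma(A+\lambda_{0}T)=\sigma(A)$, and since $\lambda_{0}$ was arbitrary the lemma follows.

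The step I expect to be the crux is the passage from $r(A+\lambda T)\le r(A)$ to $r(A+\lambda T)=r(A)$, that is, the constant-spectral-radius phenomenon coming from subharmonicity and the maximum principle; everything after that is formal. One point I would want to check carefully is that this argument never actually invokes the hypothesis that $\sigma(A)$ has no interior points. If the symmetry argument is sound, the conclusion holds for arbitrary $A$ and $T$ with $A$ having $T$-stable spectrum, and the empty-interior hypothesis is presumably inherited from the alternative derivation via \cite[Theorem 3.4.14]{aup}, where one views $\lambda\mapsto\sigma(A+\lambda T)$ as an analytic multifunction whose values all lie in the fixed empty-interior set $\sigma(A)$ and concludes that it must be constant. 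Should the slick symmetry argument conceal a gap, I would fall back on that multifunction route, in which the empty-interior condition is genuinely needed to force the spectrum to be locally, and hence globally, constant.
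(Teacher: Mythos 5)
Your reduction is genuinely different from the paper's: the paper obtains $\sigma(A+\lambda T)\subseteq\sigma(A)$ from Lemma \ref{lem:stable-spec-containment} and then invokes Aupetit's Spectral Maximum Principle, where the empty-interior hypothesis does the real work; you instead propose to get the reverse inclusion by noting that $A+\lambda_{0}T$ is again $T$-stable (this step is correct, via Remark \ref{rem:const-sp-rad}) and applying Lemma \ref{lem:stable-spec-containment} a second time. Your own worry is the right one: as written, your argument never uses the hypothesis that $\sigma(A)$ has no interior points, and the hypothesis-free statement it would establish is false. Take $\mathcal{X}=\ell^{2}(\mathbb{Z})$, let $A$ be the bilateral shift $Ae_{n}=e_{n+1}$, and let $T$ be the rank-one operator with $Te_{0}=-e_{1}$ and $Te_{n}=0$ otherwise. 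Then $A+\lambda T$ is the weighted bilateral shift whose only non-unit weight is $1-\lambda$ at position $0$; for $\lambda\ne1$ it is similar to $A$ via a bounded diagonal operator with bounded inverse, so its spectrum is the unit circle, while $A+T$ is a direct sum of a forward and a backward unilateral shift, so its spectrum is the closed unit disc. Hence $r(A+\lambda T)=1=r(A)$ for all $\lambda$, i.e.\ $A$ is $T$-stable, and $\sigma(A)$ is the unit circle, which even has empty interior, yet $\sigma(A+T)\ne\sigma(A)$.

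The example shows that the defect is not in the symmetry trick itself but in the lemma you lean on twice. Lemma \ref{lem:stable-spec-containment} rests on Lemma \ref{lem:stable-inverse-quasinilpotent}, whose proof applies H.~Cartan's theorem to $\mu\mapsto\operatorname{log}(r((\mu-A)^{-1}T))$ on $\mathbb{C}\setminus\sigma(A)$; Cartan's theorem requires a connected domain, so one only learns that $(\mu-A)^{-1}T$ is quasinilpotent on the unbounded component of the resolvent set, and the honest conclusion of Lemma \ref{lem:stable-spec-containment} is $\sigma(A+\lambda T)\subseteq\hat{\sigma}(A)$, the polynomially convex hull. Your second application then yields only $\sigma(A)\subseteq\hat{\sigma}(A+\lambda_{0}T)$, which does not close the loop. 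The argument can be repaired, and then it genuinely bypasses the Spectral Maximum Principle, if one adds the hypothesis that $\mathbb{C}\setminus\sigma(A)$ is connected (automatic in all of the paper's applications, where $A$ is compact and $\sigma(A)$ countable): then $\hat{\sigma}(A)=\sigma(A)$, the first inclusion is genuine, and since a compact subset of a compact set with empty interior and connected complement again has empty interior and connected complement, $\sigma(A+\lambda_{0}T)$ is polynomially convex and the corrected containment lemma applied to $A+\lambda_{0}T$ gives $\sigma(A)\subseteq\sigma(A+\lambda_{0}T)$. Note that the empty-interior hypothesis is then used after all, precisely at this last step.
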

\begin{proof}
This follows immediately from Corollary \ref{lem:stable-spec-containment}
and the Spectral Maximum Principle of \cite{aup}. 
\end{proof}
\begin{lem}
\label{lem:stable-inverse-const}Let $A$ and $T$ be bounded operators
on a Banach space. If $A$ has $T$-stable spectrum and $\sigma(A)$
has no interior points, then $\sigma((1-\nu T)^{-1}A)=\sigma(A)$
for all $\nu$ in $\mathbb{C}$. 
\end{lem}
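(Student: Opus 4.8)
The plan is to reduce this multiplicative statement to the additive equality already established in Lemma \ref{lem:stable-const-spec}. First I would check that the expression $(1-\nu T)^{-1}A$ is meaningful for every $\nu$: since $A$ has $T$-stable spectrum, Lemma \ref{lem:stable-quasinilpotence} guarantees that $T$ is quasinilpotent, hence so is $\nu T$, and therefore $1-\nu T$ is invertible for all $\nu$ in $\mathbb{C}$.

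The key step is a purely algebraic reformulation identifying the spectrum of the product with a spectrum of the additive form treated earlier in this section. Fix $\nu$ and a scalar $\mu$. Because $1-\nu T$ is invertible, $\mu$ lies in $\sigma\bigl((1-\nu T)^{-1}A\bigr)$ if and only if the operator $(1-\nu T)\bigl(\mu-(1-\nu T)^{-1}A\bigr)=\mu(1-\nu T)-A=(\mu-A)-\mu\nu T$ fails to be invertible; that is, if and only if $\mu$ lies in $\sigma(A+\mu\nu T)$. This yields
\[
\sigma\bigl((1-\nu T)^{-1}A\bigr)=\{\mu\in\mathbb{C}:\mu\in\sigma(A+\mu\nu T)\}.
\]

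To conclude, I would invoke Lemma \ref{lem:stable-const-spec}: since $A$ has $T$-stable spectrum and $\sigma(A)$ has no interior points, we have $\sigma(A+\lambda T)=\sigma(A)$ for \emph{every} $\lambda$ in $\mathbb{C}$, in particular for the value $\lambda=\mu\nu$. Consequently the condition $\mu\in\sigma(A+\mu\nu T)$ is equivalent to $\mu\in\sigma(A)$, and the displayed set is exactly $\sigma(A)$, which is the desired conclusion.

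There is no serious obstacle once the algebraic reformulation is in hand; the one point demanding care is that the scalar multiplying $T$, namely $\mu\nu$, depends on $\mu$, so one cannot simply quote $\sigma(A+\lambda T)\subseteq\sigma(A)$ for a fixed $\lambda$. The containment from Lemma \ref{lem:stable-spec-containment} already delivers the inclusion $\sigma\bigl((1-\nu T)^{-1}A\bigr)\subseteq\sigma(A)$, but the reverse inclusion needs the full equality $\sigma(A+\lambda T)=\sigma(A)$ holding simultaneously for all $\lambda$. This is precisely where the hypothesis that $\sigma(A)$ has no interior points enters, through Lemma \ref{lem:stable-const-spec}.
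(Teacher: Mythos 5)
Your proof is correct, and its final step is genuinely more direct than the paper's. Both arguments rest on the same algebraic identity --- the paper writes it as $\lambda(1-\nu T)^{-1}(1-\lambda^{-1}A-\nu T)=\lambda-(1-\nu T)^{-1}A$, you write it as $(1-\nu T)\bigl(\mu-(1-\nu T)^{-1}A\bigr)=(\mu-A)-\mu\nu T$ --- but the paper uses it only one-directionally, to show that $\lambda\notin\sigma(A)$ implies $\lambda\notin\sigma((1-\nu T)^{-1}A)$, i.e.\ to get the inclusion $\sigma((1-\nu T)^{-1}A)\subseteq\sigma(A)$; it then makes a second appeal to the Spectral Maximum Principle of Aupetit, using the no-interior-points hypothesis at that stage, to upgrade the inclusion to an equality. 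You instead treat the identity as a two-way equivalence, $\mu\in\sigma((1-\nu T)^{-1}A)\Leftrightarrow\mu\in\sigma(A+\mu\nu T)$, and feed in the full equality $\sigma(A+\lambda T)=\sigma(A)$ from Lemma \ref{lem:stable-const-spec} (valid for every $\lambda$, hence for the $\mu$-dependent value $\lambda=\mu\nu$), which yields both inclusions simultaneously with no further subharmonicity argument. The hypothesis that $\sigma(A)$ has no interior points is still used, but it enters only once, already packaged inside Lemma \ref{lem:stable-const-spec}, rather than being invoked a second time; your closing remark correctly identifies that the containment direction alone would follow from Lemma \ref{lem:stable-spec-containment} without that hypothesis. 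The one routine point you handle and the paper handles identically is the global invertibility of $1-\nu T$, which follows from the quasinilpotence of $T$ (Lemma \ref{lem:stable-quasinilpotence}).
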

\begin{proof}
First suppose $\lambda$ is nonzero, and that $\lambda\notin\sigma(A)$.
By Lemma \ref{lem:stable-const-spec}, $\sigma(\lambda^{-1}A+\nu T)=\sigma(\lambda^{-1}A)$,
and by Lemma \ref{lem:stable-quasinilpotence}, $T$ is quasinilpotent.
These two facts imply that $1-\nu T$ and $1-\lambda^{-1}A-\nu T$
are both invertible, and hence that \[
\lambda(1-\nu T)^{-1}(1-\lambda^{-1}A-\nu T)=\lambda-(1-\nu T)^{-1}A\]
 is invertible for all $\nu$ in $\mathbb{C}$. Therefore, $\lambda\notin\sigma((1-\nu T)^{-1}A)$
for all $\nu$ in $\mathbb{C}$.

Now suppose $0\notin\sigma(A)$. Then $A$ is invertible, implying
$(1-\nu T)^{-1}A$ is invertible, and hence by the quasinilpotence
of $T$, that $0\notin\sigma((1-\nu T)^{-1}A)$ for all $\nu$ in
$\mathbb{C}$.

We have shown that $\sigma((1-\nu T)^{-1}A)\subseteq\sigma(A)$ for
all $\nu$ in $\mathbb{C}$. Since $\sigma(A)$ has no interior points,
the result now follows from the Spectral Maximum Principle of \cite{aup}.
\end{proof}
\begin{lem}
\label{lem:bounded-rank-const-trace}Let $f$ be an entire function
from $\mathbb{C}$ into $\mathcal{B}(\mathcal{X})$. Suppose that
\begin{enumerate}
\item there exists a complex number $\lambda_{0}$ such that $\sigma(f(\lambda_{0}))=\sigma(f(\lambda))$,
and that
\item there exists $N$ such that $\operatorname{rank}(f(\lambda)\leq N$
for all complex numbers $\lambda$.
\end{enumerate}
Then $\operatorname{tr}(f(\lambda))=\operatorname{tr}(f(\lambda_{0}))$
for all $\lambda$ in $\mathbb{C}$.
\end{lem}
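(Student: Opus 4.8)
The plan is to reduce the constancy of the trace to the constancy of the algebraic multiplicities of the (fixed) nonzero eigenvalues, and to establish the latter through the analyticity of the associated Riesz projections.

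First I would observe that since $\operatorname{rank}(f(\lambda))\leq N$, each $f(\lambda)$ is a finite-rank operator, so its spectrum consists of $0$ together with at most $N$ nonzero eigenvalues. By the spectral hypothesis this set is independent of $\lambda$; write its nonzero points as $\mu_{1},\dots,\mu_{k}$ with $k\leq N$. For each $j$ I would fix a small circle $\Gamma_{j}$ enclosing $\mu_{j}$ and separating it from the remaining points of $\sigma(f(\lambda_{0}))$ (including $0$). Because the spectrum does not depend on $\lambda$, each $\Gamma_{j}$ lies in the resolvent set of $f(\lambda)$ for every $\lambda$ in $\mathbb{C}$.

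Next I would introduce the Riesz projections
\[
P_{j}(\lambda)=\frac{1}{2\pi i}\oint_{\Gamma_{j}}(z-f(\lambda))^{-1}\,dz.
\]
Since $f$ is entire and each $\Gamma_{j}$ stays in the resolvent set for all $\lambda$, the resolvent $(z-f(\lambda))^{-1}$ is analytic in $\lambda$ uniformly for $z$ on $\Gamma_{j}$, so each $P_{j}$ is an analytic operator-valued function of $\lambda$. The rank of $P_{j}(\lambda)$ is exactly the algebraic multiplicity $m_{j}(\lambda)$ of $\mu_{j}$ as an eigenvalue of $f(\lambda)$. The crucial step is to show that $m_{j}(\lambda)$ is constant: since $\lambda\mapsto P_{j}(\lambda)$ is a continuous family of idempotents, whenever $\|P_{j}(\lambda)-P_{j}(\lambda')\|<1$ the two projections are similar and hence have equal rank, so the integer-valued function $m_{j}$ is locally constant; as $\mathbb{C}$ is connected, $m_{j}(\lambda)\equiv m_{j}$.

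Finally I would use the fact that the trace of a finite-rank operator equals the sum of its nonzero eigenvalues counted with algebraic multiplicity, which follows by restricting the operator to the finite-dimensional invariant subspace given by its range. This yields
\[
\operatorname{tr}(f(\lambda))=\sum_{j=1}^{k}\mu_{j}\,m_{j}(\lambda)=\sum_{j=1}^{k}\mu_{j}\,m_{j},
\]
which is independent of $\lambda$ and therefore equals $\operatorname{tr}(f(\lambda_{0}))$. I expect the main obstacle to be the rigorous justification that the rank of the analytic projection family is constant, that is, the local-constancy argument for idempotents on the connected domain $\mathbb{C}$; the trace bookkeeping is routine once the multiplicities have been pinned down.
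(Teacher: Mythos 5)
Your proof is correct, but it takes a genuinely different route from the paper. The paper's argument is a two-line application of Liouville's theorem: since $f$ is entire with finite-rank values, $\lambda\mapsto\operatorname{tr}(f(\lambda))$ is entire, and the constancy of the spectrum together with the uniform rank bound gives $|\operatorname{tr}(f(\lambda))|\le\sum_{\alpha}n_{\alpha}(\lambda)|\alpha|\le N\|f(\lambda_{0})\|$, so the trace function is bounded and hence constant. You instead prove the stronger statement that each algebraic multiplicity $m_{j}(\lambda)$ is individually constant, via analyticity (really, just continuity) of the Riesz projections $P_{j}(\lambda)$ over a fixed contour $\Gamma_{j}$ in the common resolvent set, plus the standard fact that idempotents at distance less than $1$ are similar and so have equal rank; the constancy of the trace then follows by bookkeeping. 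Each approach has its advantages: the paper's is shorter and needs no spectral projection machinery, but it uses the full plane (Liouville) and the uniform bound $N$ in an essential way; yours yields more (constant multiplicities, not just constant trace), works verbatim for $f$ analytic on any connected domain rather than entire, and in fact never uses the uniform bound $N$ --- finite rank at each point suffices, since the common spectrum is already a finite set once $f(\lambda_{0})$ has finite rank. The only steps you should spell out in a final write-up are the similarity argument for nearby idempotents (e.g.\ via $U=QP+(1-Q)(1-P)$, which is invertible when $\|P-Q\|<1$ and intertwines $P$ and $Q$) and the identification of the Banach-space trace of a finite-rank operator with the sum of its nonzero eigenvalues counted with algebraic multiplicity, which you correctly reduce to the restriction to the finite-dimensional invariant range; neither is a gap, both are standard.
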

\begin{proof}
Since $f$ takes finite-rank values and is entire, the function $\lambda\to\operatorname{tr}(f(\lambda))$
is also entire. For each $\lambda$, the trace of $f(\lambda)$ is
the sum, with multiplicity, of the eigenvalues of $f(\lambda)$, so
we may write it as a finite sum\[
\operatorname{tr}(f(\lambda))=\sum_{\alpha\in\sigma(f(\lambda_{0}))}n_{\alpha}(\lambda)\alpha,\]
where $n_{\alpha}(\lambda)$ denotes the multiplicity of the eigenvalue
$\alpha$ with respect to $f(\lambda)$. But clearly\[
\sum_{\alpha\in\sigma(f(\lambda_{0}))}n_{\alpha}(\lambda)|\alpha|\le\operatorname{rank}(f(\lambda))\|f(\lambda_{0})\|\leq N\|f(\lambda_{0})\|,\]
which implies that the function $\lambda\to\operatorname{tr}(f(\lambda))$
is bounded. By Liouville's Theorem, it now follows that $\operatorname{tr}(f(\lambda))=\operatorname{tr}(f(\lambda_{0}))$
for all $\lambda$ in $\mathbb{C}$. 
\end{proof}
The next result extends \cite[Lemma 4.2]{shu2}, and gives a symmetric
trace condition which will be useful for our results.

\begin{lem}
\label{lem:rig-zer-tr}Let $A$ and $B$ be bounded operators on a
Banach space. If $A$ is $B$-stable and one of $A$ or $B$ is of
finite rank, then $\operatorname{tr}(A^{n}B)=\operatorname{tr}(AB^{n})=0$
for all $n\geq1$.
\end{lem}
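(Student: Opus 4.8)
The plan is to treat separately the two cases distinguished by the hypothesis, using in each a single resolvent-type operator whose powers generate the desired traces as Taylor or Laurent coefficients. In both cases I first record that $B$ is quasinilpotent, by Lemma \ref{lem:stable-quasinilpotence}. The guiding device is that the trace of a \emph{finite-rank} quasinilpotent operator vanishes, since its trace is the sum of its eigenvalues; the work lies in arranging finite-rank operators whose coefficients, after using the cyclicity of the trace, collapse to multiples of $\operatorname{tr}(A^nB)$ and $\operatorname{tr}(AB^n)$.

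Suppose first that $B$ has finite rank. For $|\mu|>r(A)$ I would set $Q_\mu=(\mu-A)^{-1}B$, which is finite rank and, by Lemma \ref{lem:stable-inverse-quasinilpotent}, quasinilpotent; hence every power $Q_\mu^m$ is finite rank, quasinilpotent, and of trace zero. Expanding the Neumann series $(\mu-A)^{-1}=\sum_{k\ge0}\mu^{-k-1}A^k$ and using that the trace is norm-continuous on operators of rank at most $\operatorname{rank}(B)$, I would write $\operatorname{tr}(Q_\mu^m)$ as a Laurent series in $\mu^{-1}$ that vanishes identically, so that all of its coefficients vanish. Taking $m=1$ gives $\sum_{k\ge0}\mu^{-k-1}\operatorname{tr}(A^kB)=0$, whence $\operatorname{tr}(A^nB)=0$. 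Taking $m=n$, the coefficient of $\mu^{-n-1}$ is the sum over the $n$ words $A^{k_1}B\cdots A^{k_n}B$ with $\sum k_i=1$, each of which equals $\operatorname{tr}(AB^n)$ by cyclicity; this coefficient is $n\operatorname{tr}(AB^n)$, so $\operatorname{tr}(AB^n)=0$.

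Suppose instead that $A$ has finite rank, so that $\sigma(A)$ is finite and in particular has no interior points. Since $B$ is quasinilpotent, $P_\nu=(1-\nu B)^{-1}A=\sum_{j\ge0}\nu^jB^jA$ is entire in $\nu$ and of rank $\operatorname{rank}(A)$, and by Lemma \ref{lem:stable-inverse-const} its spectrum equals $\sigma(A)$ for every $\nu$. For each $m$ the power $P_\nu^m$ is then entire, of rank at most $\operatorname{rank}(A)$, with constant spectrum $\{z^m:z\in\sigma(A)\}$ by the spectral mapping theorem, so Lemma \ref{lem:bounded-rank-const-trace} applies and $\operatorname{tr}(P_\nu^m)$ is constant, equal to its value $\operatorname{tr}(A^m)$ at $\nu=0$. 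Taking $m=1$ gives $\sum_{j\ge0}\nu^j\operatorname{tr}(AB^j)$ constant, so $\operatorname{tr}(AB^n)=0$; taking $m=n$ and reading off the coefficient of $\nu$ gives, again by cyclicity, $n\operatorname{tr}(A^nB)=0$, so $\operatorname{tr}(A^nB)=0$.

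The main obstacle I anticipate is not any single estimate but the asymmetry of the statement: the two identities are not interchangeable, and the requirement that the operators involved be finite rank, so that the traces are defined and the constant-trace results apply, forces the use of $(\mu-A)^{-1}B$ when $B$ is finite rank and of $(1-\nu B)^{-1}A$ when $A$ is finite rank. Extracting \emph{both} identities from a single generating operator in each case then depends on passing to powers and on the cyclicity of the trace to collapse the resulting words into multiples of the two target quantities; some care is also needed to justify interchanging the trace with the Neumann expansion, which I would base on the uniform bound on the ranks of the partial sums.
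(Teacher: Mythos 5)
Your proof is correct and follows essentially the same route as the paper's: both cases hinge on the same resolvent operators (your $(\mu-A)^{-1}B$ written as a Laurent series in $\mu^{-1}$ is just the paper's $(1-\nu A)^{-1}B$ under the reparametrization $\mu=\nu^{-1}$), the same Lemmas \ref{lem:stable-inverse-quasinilpotent}, \ref{lem:stable-inverse-const} and \ref{lem:bounded-rank-const-trace}, and the same extraction of $\operatorname{tr}(A^{n}B)$ and $\operatorname{tr}(AB^{n})$ from the coefficients of the $n$-th powers via cyclicity of the trace. If anything, you are slightly more explicit than the paper in justifying the term-by-term tracing of the expansion, via the observation that every partial sum ends in the finite-rank factor and hence has uniformly bounded rank.
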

\begin{proof}
First suppose that $A$ is of finite rank. Since $B$ is quasinilpotent
by Lemma \ref{lem:stable-quasinilpotence}, the function $\nu\to(1-\nu B)^{-1}A$
is entire. Moreover, $\sigma((1-\nu B)^{-1}A)=\sigma(A)$ for all
$\nu$ in $\mathbb{C}$ by Lemma \ref{lem:stable-inverse-const}.
Then, taking $n$-th powers, the function $v\to((1-\nu B)^{-1}A)^{n}$
is also entire, and $\sigma(((1-\nu B)^{-1}A)^{n})=\sigma(A^{n})$
for all $\nu$ in $\mathbb{C}$. Clearly $\operatorname{rank}(((1-\nu B)^{-1}A)^{n})\leq\operatorname{rank}(A)$,
so $\operatorname{tr}(((1-\nu B)^{-1}A)^{n})=\operatorname{tr}(A^{n})$
for all $\nu$ in $\mathbb{C}$ by Lemma \ref{lem:bounded-rank-const-trace}.

For $|\nu|<\|B\|^{-1}$, we may expand $(1-\nu B)^{-1}A$ as a power
series in $\nu$,\[
(1-\nu B)^{-1}A=\sum_{k\geq0}B^{k}A\nu^{k}.\]
Hence\[
((1-\nu B)^{-1}A)^{n}=(\sum_{k\geq0}B^{k}A\nu^{k})^{n}.\]
The coefficient of $\nu^{k}$ in the above expansion is $B^{k}A$,
and for $n\ge1$, the coefficient of $\nu$ is\[
BA^{n}+ABA^{n-1}+...+A^{n-1}BA.\]
But we may also expand the constant function $\operatorname{tr}((1-\nu B)^{-1}A)^{n})$
as a power series in $\nu$, and the linearity of the trace implies
that for $n=1$, the coefficient of $\nu^{k}$ in this expansion is
$\operatorname{tr}(B^{k}A)$, and for $n\geq1$, that the coefficient
of $\nu$ is\[
\operatorname{tr}(BA^{n}+ABA^{n-1}+...+A^{n-1}BA)=n\operatorname{tr}(A^{n}B).\]
Comparing the coefficients on the left and right hand side of the
equation $\operatorname{tr}(((1-\nu B)^{-1}A)^{n})=\operatorname{tr}(A^{n})$
therefore gives $\operatorname{tr}(A^{n}B)=0$ for all $n\geq1$,
and $\operatorname{AB^{k}}=0$ for all $k\geq1$. 

Now suppose that $B$ is of finite rank. The function $\nu\rightarrow(1-\nu A)^{-1}B$
is analytic for $\nu^{-1}\not\in\sigma(A)$, with quasinilpotent values
by Lemma \ref{lem:stable-inverse-quasinilpotent}. Taking $n$-th
powers, the function $\nu\to((1-\nu A)^{-1}B)^{n}$ is also analytic
with quasinilpotent values for $\nu^{-1}\not\in\sigma(A)$. This means
$\operatorname{tr}(((1-\nu A)^{-1}B)^{n})=0$ for all $\nu\notin\sigma(A)$.

As above, for $|\nu|<\|A\|$, we may expand $((1-\nu A)^{-1}B)^{n}$
as a power series in $\nu$, \[
((1-\nu A)^{-1}B)^{n}=(\sum_{k\geq0}A^{k}B\nu^{k})^{n}.\]
For $n=1$, the coefficient of $\nu^{k}$ in the above expansion is
$A^{k}B$, and for $n\geq1$, the coefficient of $\nu$ is\[
AB^{n}+BAB^{n-1}+...+B^{n-1}A.\]
Proceeding as before, we may also expand the constant function $\operatorname{tr}(((1-\nu A)^{-1}B)^{n})$
as a power series in $\nu$, and the linearity of the trace implies
that for $n=1$, the coefficient of $\nu^{k}$ in this expansion is
$\operatorname{tr}(A^{k}B)$, and for $n\geq1$, that the coefficient
of $\nu$ is\[
\operatorname{tr}(AB^{n}+BAB^{n-1}+...+B^{n-1}AB)=n\operatorname{tr}(AB^{n}).\]
Comparing the coefficients of the left and right hand side of the
equation $\operatorname{tr}(((1-\nu A)^{-1}B)^{n})=0$ therefore gives
$\operatorname{tr}(A^{k}B)=0$ for all $k\geq1$, and $\operatorname{tr}(AB^{n})=0$
for all $n\geq1$. 
\end{proof}

\section{Spectral Conditions on a Lie Algebra of Compact Operators}

In this section we study Lie algebras of compact operators which satisfy
spectral properties like sublinearity and submultiplicativity.

Recall that an operator Lie algebra $\mathcal{L}$ is a subspace of
$\mathcal{B}(\mathcal{X})$ which is closed under the Lie commutator
product $[A,B]=AB-BA$, for $A,B\in\mathcal{L}$. A Lie ideal $\mathcal{I}$
of $\mathcal{L}$ is a Lie subalgebra of $\mathcal{L}$ such that
$[A,B]\in\mathcal{I}$ whenever $A\in\mathcal{L}$ and $B\in\mathcal{I}$.
For $A$ in $\mathcal{L}$, we define the bounded linear operator
$\operatorname{ad}_{A}$ on $\mathcal{L}$ by $\operatorname{ad}_{A}(B)=[A,B].$

\begin{defn}
An operator Lie algebra $\mathcal{L}$ is said to be an \emph{Engel
Lie algebra} if $\operatorname{ad}_{A}$ is quasinilpotent for every
$A$ in $\mathcal{L}$. An ideal of $\mathcal{L}$ is said to be an
\emph{Engel ideal} if it is an Engel Lie algebra.
\end{defn}
\begin{rem*}
There are two particularly important classes of Engel Lie algebras:
\begin{enumerate}
\item Commutative operator Lie algebras are Engel Lie algebras, since $\operatorname{ad}_{A}=0$
for every $A\in\mathcal{L}$.
\item Operator Lie algebras in which every element is compact and quasinilpotent
are Engel Lie algebras. Indeed, for $A\in\mathcal{L}$, $\operatorname{ad}_{A}$
is the restriction of $L_{A}-R_{A}$ to $\mathcal{L}$, where $L_{A}$
and $R_{A}$ denote the (commuting) operators of left and right multiplication
by $A$ on $\mathcal{B}(\mathcal{X})$ respectively. We always have
$\sigma(L_{A})\subseteq\sigma(A)$ and $\sigma(R_{A})\subseteq\sigma(A)$,
so $L_{A}$ and $R_{A}$ are quasinilpotent. It follows that $L_{A}-R_{A}$
is quasinilpotent, and hence so is its restriction to $\mathcal{L}$.
\end{enumerate}
\end{rem*}
We require the following two important results of Shulman and Turovskii.
The first result from \cite[Corollary 11.6]{shu1} is an extension
of Engel's well-known theorem to Lie algebras of compact operators.
The second result from \cite[Corollary 4.20]{shu2} establishes the
abundance of finite rank operators in an irreducible Lie algebra of
compact operators.

\begin{thm}
\label{thm:lie-engel-ideal}A Lie algebra of compact operators which
contains a nonzero Engel Lie ideal is reducible. In particular, an
Engel Lie algebra of compact operators is triangularizable.
\end{thm}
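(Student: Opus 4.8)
The plan is to deduce the ``in particular'' clause from the first assertion together with the Triangularization Lemma, and to attack the first assertion by contradiction. Granting the first assertion, a nonzero Engel Lie algebra $\mathcal{L}$ is a nonzero Engel Lie ideal of itself, hence reducible; to upgrade reducibility to triangularizability I would verify that the property ``Engel Lie algebra of compact operators'' is inherited by quotients. This is routine: for nested invariant subspaces $\mathcal{M}_1\subseteq\mathcal{M}_2$ the compression $A\mapsto\hat A$ is a Lie homomorphism of $\mathcal{L}$ onto the quotient algebra $\hat{\mathcal{L}}$, so that $\operatorname{ad}_{\hat A}$ is induced by $\operatorname{ad}_A$ through this homomorphism; as a quotient of a quasinilpotent operator it is again quasinilpotent, and the quotient operators are compact. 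The Triangularization Lemma then yields triangularizability.

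It remains to prove that a Lie algebra $\mathcal{L}$ of compact operators carrying a nonzero Engel Lie ideal $\mathcal{I}$ is reducible, and I would argue by contradiction, assuming $\mathcal{L}$ irreducible. The first step is to pass to finite rank: using the Shulman--Turovskii abundance of finite-rank operators in irreducible Lie algebras of compact operators (and the fact that nonzero ideals of such algebras are themselves large), I would locate a nonzero finite-rank operator inside $\mathcal{I}$. The set $\mathcal{F}$ of finite-rank operators in $\mathcal{I}$ is then a nonzero Lie ideal of $\mathcal{L}$, since the finite-rank operators form a two-sided associative ideal and hence $[A,F]$ is finite-rank and lies in $\mathcal{I}$ for $A\in\mathcal{L}$ and $F\in\mathcal{F}$. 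Moreover $\mathcal{F}$ is again an Engel Lie algebra, being an $\operatorname{ad}$-invariant subalgebra of the Engel algebra $\mathcal{I}$, so the adjoint action restricts to a quasinilpotent operator.

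The heart of the proof is an Engel-type argument applied to the finite-rank Engel ideal $\mathcal{F}$. Mimicking the classical Engel and Lie theorems, I would produce a common generalized eigenvector for $\mathcal{F}$ and form the associated weight spaces; because $\mathcal{F}$ is an \emph{ideal} of $\mathcal{L}$, the larger algebra permutes these weight spaces, and summing over an $\mathcal{L}$-orbit produces a nonzero proper closed subspace invariant under all of $\mathcal{L}$, contradicting irreducibility. An alternative route is to show that the derived algebra $[\mathcal{L},\mathcal{L}]$ consists of Volterra (compact quasinilpotent) operators and to invoke the triangularizability of Lie algebras of Volterra operators, extracting an invariant subspace from this ideal.

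The main obstacle is precisely this last step: transporting the finite-dimensional Engel and weight-space machinery into the Banach-space setting. Extracting a common eigenvector from the mere quasinilpotence of $\operatorname{ad}_A$ (the elements of $\mathcal{F}$ need not themselves be quasinilpotent, as the commutative case shows) and verifying that the resulting decomposition is genuinely preserved by $\mathcal{L}$ cannot be accomplished by linear algebra alone. This is where the analytic tools of the theory are indispensable, namely the subharmonicity of $\lambda\mapsto r(A+\lambda T)$ exploited throughout Section \ref{sec:rigidity} and the Lomonosov-type invariant-subspace lemmas for compact operators, and it is the reason the result is genuinely harder than its finite-dimensional prototype.
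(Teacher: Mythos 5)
The first thing to note is that the paper does not prove this statement at all: Theorem \ref{thm:lie-engel-ideal} is imported from Shulman and Turovskii (\cite[Corollary 11.6]{shu1}), so there is no internal argument to compare yours against. Judged on its own terms, your proposal has a genuine gap, and you identify it yourself: the entire content of the theorem is concentrated in the step you defer, namely extracting an $\mathcal{L}$-invariant subspace from the finite-rank Engel ideal $\mathcal{F}$. The finite-dimensional Engel/weight-space machinery does not transport. Engel's theorem produces a common kernel vector only when the elements themselves are nilpotent, whereas here they are merely $\operatorname{ad}$-quasinilpotent (as you note, $\mathcal{F}$ can be commutative with rich spectrum), so what you would actually need is a Lie--Cartan weight decomposition, and no such decomposition exists for a Banach-space operator Lie algebra without essentially the theorem you are trying to prove. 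Your alternative route is also circular: there is no a priori reason for $[\mathcal{L},\mathcal{L}]$ to consist of Volterra operators --- in finite dimensions that conclusion is \emph{extracted from} triangularizability, which is the goal. The actual proof in \cite{shu1} runs through the theory of the joint spectral radius of precompact sets of compact operators (Berger--Wang type formulas and the behaviour of the joint spectral radius under the adjoint representation), a different and substantially heavier toolkit than the single-operator subharmonicity arguments of Section \ref{sec:rigidity}, which do not suffice here.

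Two smaller points. First, locating a nonzero finite-rank operator inside the ideal $\mathcal{I}$ is itself not immediate: Theorem \ref{thm:lie-no-finite-ranks} supplies finite-rank nilpotents in an irreducible uniformly closed Lie algebra, but not inside a prescribed ideal, and since the elements of an Engel ideal need not be quasinilpotent you cannot simply intersect with the quasinilpotents. Second, your reduction of the ``in particular'' clause to the first assertion via the Triangularization Lemma is sound in outline (an Engel Lie algebra is a nonzero Engel ideal of itself, and the Engel property passes to quotients because $\operatorname{ad}_{\hat{A}}$ is induced by $\operatorname{ad}_{A}$ on a quotient, where the spectral radius can only decrease); this is the routine half of the argument, and it is not where the difficulty lies.
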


\begin{thm}
\label{thm:lie-no-finite-ranks}A uniformly closed Lie algebra of
compact operators which doesn't contain any nilpotent finite-rank
elements is triangularizable.
\end{thm}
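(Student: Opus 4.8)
The plan is to prove the contrapositive in a form adapted to the Triangularization Lemma: if such an algebra fails to be triangularizable, it must contain a nonzero nilpotent finite-rank operator. The central object throughout is the set $\mathcal{F}$ of finite-rank elements of $\mathcal{L}$, which is a Lie ideal of $\mathcal{L}$ because $[A,B]=AB-BA$ has finite rank whenever either factor does. My strategy is to first establish reducibility for every algebra in the class, and then to upgrade reducibility to triangularizability by running the argument along a maximal chain of invariant subspaces.

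For reducibility, suppose $\mathcal{L}\neq0$ acts on a space of dimension at least $2$ and contains no nonzero nilpotent finite-rank operator, so that in particular $\mathcal{F}$ contains no nonzero nilpotent operator. I would split into two cases. If $\mathcal{F}=0$, then $\mathcal{L}$ has no nonzero finite-rank elements at all; were $\mathcal{L}$ irreducible, the deep theorem of Turovskii that an irreducible uniformly closed Lie algebra of compact operators contains nonzero finite-rank operators (see \cite{shu2}) would be contradicted, so $\mathcal{L}$ is reducible. If $\mathcal{F}\neq0$, then I would invoke the structure theory of linear Lie algebras over $\mathbb{C}$: such a Lie algebra containing no nonzero nilpotent operator must be commutative, since by Lie's theorem the derived algebra of a solvable linear Lie algebra is simultaneously strictly triangularizable and hence consists of nilpotent operators, while any nonzero semisimple Levi factor would contribute nilpotent elements acting as nonzero nilpotent operators in the faithful representation. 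Applying this to $\mathcal{F}$, after the technical reduction from finite-rank operators on $\mathcal{X}$ to a genuinely finite-dimensional situation via compressions to finite-dimensional semi-invariant subspaces, shows that $\mathcal{F}$ is commutative. A nonzero commutative Lie ideal is an Engel Lie ideal by the remark following the definition of Engel Lie algebra, so Theorem \ref{thm:lie-engel-ideal} yields that $\mathcal{L}$ is reducible.

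To pass from reducibility to triangularizability, I would fix a maximal chain $\mathcal{C}$ of closed invariant subspaces of $\mathcal{L}$ and show that every gap $\mathcal{M}_{1}\subsetneq\mathcal{M}_{2}$ in $\mathcal{C}$ satisfies $\dim(\mathcal{M}_{2}\ominus\mathcal{M}_{1})\le1$. If some gap had dimension at least $2$, then by maximality the induced quotient algebra $\hat{\mathcal{L}}$ would act irreducibly on $\mathcal{M}_{2}/\mathcal{M}_{1}$, and the analysis above (Turovskii's theorem together with the structure argument) would produce a nonzero nilpotent finite-rank operator $\hat{N}$ in $\hat{\mathcal{L}}$. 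One must then transfer $\hat{N}$ back to a nonzero nilpotent finite-rank element of $\mathcal{L}$ itself, contradicting the hypothesis and forcing the gap to be one-dimensional.

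I expect this last transfer to be the main obstacle. The property ``$\mathcal{L}$ contains no nonzero nilpotent finite-rank operator'' is \emph{not} inherited by quotients: a non-nilpotent finite-rank operator carrying a single Jordan block of nonzero eigenvalue alongside a nilpotent block can induce a genuinely nonzero nilpotent operator on a quotient, so the Triangularization Lemma cannot be applied naively with this property. Overcoming this requires controlling how the finite-rank ideal $\mathcal{F}$ of $\mathcal{L}$ maps onto the finite-rank part of each quotient, which is exactly where the radical-theoretic machinery of \cite{shu2} enters: one isolates an ideal cut out by a nilpotency and finite-rank condition that \emph{is} stable under passage to quotients, and shows that the hypothesis forces this ideal to be all of $\mathcal{L}$, so that the Triangularization Lemma does apply to it.
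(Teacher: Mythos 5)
First, a point of comparison: the paper does not prove this statement at all. Theorem \ref{thm:lie-no-finite-ranks} is quoted from \cite[Corollary 4.20]{shu2} and used as a black box, so there is no internal proof to measure yours against; what follows assesses your argument on its own terms. The reducibility half is essentially viable. The case $\mathcal{F}\ne0$ can be made rigorous along your lines: the Lie algebra generated by two finite-rank elements $F$ and $G$ annihilates $\ker F\cap\ker G$ and maps everything into the finite-dimensional space $\operatorname{ran}F+\operatorname{ran}G$, hence is a finite-dimensional Lie algebra with a faithful finite-dimensional representation that detects nilpotency; your Levi-plus-Lie argument then forces $[F,G]=0$, and commutativity of $\mathcal{F}$ together with Theorem \ref{thm:lie-engel-ideal} gives reducibility. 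But the case $\mathcal{F}=0$ already consumes the hard theorem of \cite{shu2} that an irreducible uniformly closed Lie algebra of compact operators contains nonzero finite-rank elements, which is the deep content underlying the very corollary being proved. This is not circular, but it means your argument is at best a reduction of \cite[Corollary 4.20]{shu2} to an earlier theorem of the same paper, not an independent proof.

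The genuine gap is the passage from reducibility to triangularizability, and you have diagnosed it correctly yourself: the hypothesis ``no nonzero nilpotent finite-rank elements'' is not inherited by quotients, so the Triangularization Lemma cannot be applied to it, and a nonzero nilpotent finite-rank operator in a quotient algebra $\hat{\mathcal{L}}$ need not lift to one in $\mathcal{L}$ (your Jordan-block example is exactly right). Your closing paragraph does not repair this; it asserts that \cite{shu2} provides a quotient-stable ideal ``cut out by a nilpotency and finite-rank condition'' which the hypothesis forces to be all of $\mathcal{L}$, but you neither define this ideal nor prove either of the two claims about it. As written, the argument establishes reducibility of every algebra in the class (modulo the citation) but stops short of triangularizability, so the proof is incomplete precisely at the step you identify as the main obstacle.
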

For a general Lie algebra of compact operators, it is often more tractable
to study its ideal of finite-rank operators due to such niceties as
the existence of a trace, so we are especially interested in situations
when the reducibility of this ideal implies the reducibility of the
entire Lie algebra. Shulman and Turovskii raised the question \cite{shu2}
of whether a Lie algebra of compact operators is reducible whenever
its ideal of finite-rank operators is reducible. That such a result
holds for an associative algebra of compact operators is a consequence
of Lomonosov's Theorem (see for example \cite[Theorem 7.4.7]{rad1}).
The next result, a generalization of \cite[Lemma 2.3]{ken2}, provides
at least one example of a situation in which this type of result is
true in the Lie algebra case.

\begin{thm}
\label{thm:lie-zero-trace-ideal}Let $\mathcal{L}$ be a uniformly
closed Lie algebra of compact operators. If $\mathcal{L}$ contains
a nonzero element $A$, with the property that $\operatorname{tr}(AF)=0$
for every finite-rank operator $F$ in $\mathcal{L}$, then $\mathcal{L}$
is reducible.
\end{thm}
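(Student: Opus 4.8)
The plan is to manufacture a nonzero Engel Lie ideal of $\mathcal{L}$ and then invoke Theorem \ref{thm:lie-engel-ideal}. Let $\mathcal{F}$ denote the Lie ideal of finite-rank operators in $\mathcal{L}$. If $\mathcal{F}=0$ then $\mathcal{L}$ contains no nilpotent finite-rank elements and is triangularizable by Theorem \ref{thm:lie-no-finite-ranks}, so I may assume $\mathcal{F}\neq0$. The central object is the radical of the trace form on $\mathcal{F}$,
\[
\mathcal{F}_{0}=\{F\in\mathcal{F}:\operatorname{tr}(FG)=0\text{ for all }G\in\mathcal{F}\}.
\]
A short computation using cyclicity of the trace shows $\mathcal{F}_{0}$ is a Lie ideal of $\mathcal{L}$: for $C\in\mathcal{L}$ and $F\in\mathcal{F}_{0}$ one has $\operatorname{tr}([C,F]G)=\operatorname{tr}(F[G,C])$, and since $[G,C]\in\mathcal{F}$ this vanishes.

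I would next record how the hypothesis on $A$ enters. Setting $\mathcal{J}=\{B\in\mathcal{L}:\operatorname{tr}(BF)=0\text{ for all }F\in\mathcal{F}\}$, the same manipulation shows $\mathcal{J}$ is a Lie ideal containing $A$, and for every $F\in\mathcal{F}$ the commutator $[A,F]$ lies in $\mathcal{J}\cap\mathcal{F}=\mathcal{F}_{0}$, because $\operatorname{tr}([A,F]G)=\operatorname{tr}(A[F,G])=0$. This produces a clean dichotomy: either (a) there is a finite-rank $F\in\mathcal{L}$ with $[A,F]\neq0$, whence $\mathcal{F}_{0}\neq0$; or (b) $A$ commutes with every finite-rank operator in $\mathcal{L}$.

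In case (a) I would exploit that the trace form vanishes identically on $\mathcal{F}_{0}$ (as $\mathcal{F}_{0}\subseteq\mathcal{J}$ and $\mathcal{F}_{0}\subseteq\mathcal{F}$). By the classical Cartan solvability criterion — applied on finite-dimensional invariant subspaces, which is legitimate because every element of $\mathcal{F}_{0}$ is finite rank and traces of products can be computed after restricting to a finite-dimensional subspace containing the relevant ranges — the derived algebra $[\mathcal{F}_{0},\mathcal{F}_{0}]$ consists of nilpotent operators. If $[\mathcal{F}_{0},\mathcal{F}_{0}]\neq0$, then by the Jacobi identity it is a nonzero Lie ideal of $\mathcal{L}$ consisting of compact quasinilpotent operators, hence an Engel ideal by the remark following the definition of Engel Lie algebra; if $[\mathcal{F}_{0},\mathcal{F}_{0}]=0$, then $\mathcal{F}_{0}$ is itself a nonzero abelian, hence Engel, ideal. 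In either event Theorem \ref{thm:lie-engel-ideal} yields reducibility.

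In case (b), $A$ is a nonzero compact operator commuting with the entire finite-rank ideal $\mathcal{F}$, and I would argue by contradiction. If $\mathcal{L}$ were irreducible, then the abundance of finite-rank operators furnished by the Shulman--Turovskii theory underlying Theorem \ref{thm:lie-no-finite-ranks} forces $\mathcal{F}$ to act irreducibly, so Schur's lemma makes every operator commuting with $\mathcal{F}$ scalar; a nonzero compact scalar is impossible on an infinite-dimensional space, while in finite dimensions $A=cI$ contradicts $\operatorname{tr}(AI)=c\dim\mathcal{X}\neq0$. I expect case (b) to be the main obstacle: transferring reducibility from the finite-rank ideal $\mathcal{F}$ to $\mathcal{L}$ itself, and isolating precisely which form of the abundance theorem is needed to run the Schur argument, is the delicate point — together with the finite-dimensional reduction required to apply Cartan's criterion in case (a).
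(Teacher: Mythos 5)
Your case (a) is essentially the paper's own argument: the paper forms the ideal $\mathcal{I}=\{B\in\mathcal{L}:\operatorname{tr}(BF)=0\mbox{ for all }F\in\mathcal{F}\}$ (your $\mathcal{J}$), passes to $\mathcal{I}\cap\mathcal{F}$ (your $\mathcal{F}_{0}$), invokes a Cartan-type criterion (via \cite[Theorem 4.5]{ken1}) to see that $[\mathcal{F}_{0},\mathcal{F}_{0}]$ consists of nilpotent finite-rank operators, and then produces a nonzero Engel ideal exactly as you do. The genuine gap is in your case (b). There you assume that if $\mathcal{L}$ is irreducible then its finite-rank ideal $\mathcal{F}$ must act irreducibly, so that Schur's lemma applies to $A$. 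Theorem \ref{thm:lie-no-finite-ranks} gives nothing of the sort: it says a uniformly closed Lie algebra of compact operators with no nilpotent finite-rank elements is triangularizable, which does not let you transfer irreducibility from $\mathcal{L}$ down to $\mathcal{F}$. Indeed, whether reducibility of the finite-rank ideal forces reducibility of the whole Lie algebra is exactly the question of Shulman and Turovskii that the paper records as \emph{open} in the paragraph immediately preceding this theorem. So the Schur step is unsupported and case (b) does not close.

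The repair is to change the dichotomy: split on whether $\mathcal{F}_{0}=\mathcal{J}\cap\mathcal{F}$ is zero, rather than on whether $A$ commutes with $\mathcal{F}$. If $\mathcal{F}_{0}\ne0$, your case (a) argument runs verbatim (note it does not need $[A,F]\ne0$; it only needs $\mathcal{F}_{0}\ne0$). If $\mathcal{F}_{0}=0$, then the Lie ideal $\mathcal{J}$ is nonzero (it contains $A$) and contains no nonzero finite-rank elements at all, so Theorem \ref{thm:lie-no-finite-ranks} applies to $\mathcal{J}$ itself (after taking its uniform closure), exhibiting a nonzero triangularizable ideal of $\mathcal{L}$; reducibility of $\mathcal{L}$ then follows as the paper argues via Theorem \ref{thm:lie-engel-ideal}. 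This is precisely how the paper disposes of the residual case, and it avoids any appeal to irreducibility of $\mathcal{F}$. A smaller point: your in-line justification of Cartan's criterion for the possibly infinite-dimensional $\mathcal{F}_{0}$ by ``restricting to finite-dimensional invariant subspaces'' is the content of \cite[Theorem 4.5]{ken1} and deserves either that citation or a more careful reduction, since the Lie subalgebra generated by two finite-rank operators need not be finite-dimensional.
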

\begin{proof}
Let $\mathcal{F}$ denote the ideal of finite operators in $\mathcal{L}$,
and let $\mathcal{I}=\{A\in\mathcal{L}:\operatorname{tr}(AF)=0\mbox{ for all }F\in\mathcal{F}\}$.
Using the identity\[
\operatorname{tr}(\{A,B\}F)=\operatorname{tr}(A\{B,F\}),\]
it is easy to verify that $\mathcal{I}$ is a Lie ideal of $\mathcal{L}$.
Consider the Lie ideal $\mathcal{I}_{\mathcal{F}}=\mathcal{I}\cap\mathcal{F}$.
If $\mathcal{I}_{\mathcal{F}}=0$, then $\mathcal{I}$ is triangularizable
by Theorem \ref{thm:lie-no-finite-ranks}, and thus $\mathcal{L}$
is reducible by Theorem \ref{thm:lie-engel-ideal}. Hence we may suppose
that $\mathcal{I}_{\mathcal{F}}\ne0$.

We have $\operatorname{tr}(AB)=0$ for all $A,B\in\mathcal{I}_{\mathcal{F}}$,
which implies by \cite[Theorem 4.5]{ken1} that $[\mathcal{I}_{\mathcal{F}},\mathcal{I}_{\mathcal{F}}]$
consists of nilpotent elements. Note that $[\mathcal{I}_{\mathcal{F}},\mathcal{I}_{\mathcal{F}}]$
is a Lie ideal of $\mathcal{L}$. Indeed, for $A\in\mathcal{L}$ and
$F,G\in\mathcal{I}_{\mathcal{F}}$,\[
[A,[F,G]]=-[F,[G,A]]-[G,[A,F]]\]
by the Jacobi identity. Hence $\mathcal{I}_{\mathcal{F}}$ is triangularizable
by \ref{thm:lie-engel-ideal}. If $[\mathcal{I}_{\mathcal{F}},\mathcal{I}_{\mathcal{F}}]=0$,
then $\mathcal{I}_{\mathcal{F}}$ is a commutative Lie ideal, and
hence is triangularizable by Theorem \ref{thm:lie-engel-ideal}. Otherwise,
the triangularizable ideal $[\mathcal{I}_{\mathcal{F}},\mathcal{I}_{\mathcal{F}}]$
is nonzero. Either way, $\mathcal{L}$ has a nonzero triangularizable
ideal, which implies by Theorem \ref{thm:lie-engel-ideal} that it
is reducible.
\end{proof}
\begin{lem}
\label{thm:lie-rig-red}Let $\mathcal{L}$ be a Lie algebra of compact
operators with a nonzero element $T$. If $\mathcal{L}$ has $T$-stable
spectrum, then $\mathcal{L}$ is reducible.
\end{lem}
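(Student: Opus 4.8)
The plan is to produce, directly from $T$ itself, a witness for Theorem~\ref{thm:lie-zero-trace-ideal}. Concretely, I would show that $\operatorname{tr}(TF)=0$ for every finite-rank operator $F$ in $\mathcal{L}$; since $T\neq 0$, Theorem~\ref{thm:lie-zero-trace-ideal} then yields reducibility at once. The source of these trace identities is Lemma~\ref{lem:rig-zer-tr}: because $\mathcal{L}$ has $T$-stable spectrum, every $F\in\mathcal{L}$ is $T$-stable, so whenever $F$ is of finite rank the pair $(F,T)$ satisfies the hypotheses of that lemma (one of the two operators has finite rank), giving $\operatorname{tr}(F^{n}T)=\operatorname{tr}(FT^{n})=0$ for all $n\geq 1$. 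Taking $n=1$ and using cyclicity of the trace gives $\operatorname{tr}(TF)=\operatorname{tr}(FT)=0$, exactly as required.

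There is one gap to bridge, and it is the main obstacle: Theorem~\ref{thm:lie-zero-trace-ideal} is stated for a uniformly closed algebra, whereas $\mathcal{L}$ is not assumed closed. I would first reduce to the closed case. Since invariance of a fixed closed subspace is a norm-closed condition on operators, $\mathcal{L}$ and its uniform closure $\overline{\mathcal{L}}$ have precisely the same invariant subspaces, so it suffices to prove $\overline{\mathcal{L}}$ reducible; moreover $\overline{\mathcal{L}}$ is again a Lie algebra of compact operators containing the nonzero element $T$. To run the argument of the previous paragraph inside $\overline{\mathcal{L}}$, I must know that $\overline{\mathcal{L}}$ still has $T$-stable spectrum, for only then is an arbitrary finite-rank $F\in\overline{\mathcal{L}}$ guaranteed to be $T$-stable.

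Establishing that $\overline{\mathcal{L}}$ inherits $T$-stability is the crux, and I would attack it through the resolvent criterion of Lemma~\ref{lem:stable-inverse-quasinilpotent}: an operator $A$ is $T$-stable if and only if $(\mu-A)^{-1}T$ is quasinilpotent for all $\mu\notin\sigma(A)$. Fix $A=\lim_{k}A_{k}\in\overline{\mathcal{L}}$ with $A_{k}\in\mathcal{L}$, and choose $\mu$ with $|\mu|$ larger than $\sup_{k}\|A_{k}\|$ (finite, as the $A_{k}$ converge); then $\mu\notin\sigma(A_{k})$ for every $k$ and $\mu\notin\sigma(A)$, and continuity of inversion gives $(\mu-A_{k})^{-1}T\to(\mu-A)^{-1}T$ in norm. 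Each $(\mu-A_{k})^{-1}T$ is compact (since $T$ is compact) and quasinilpotent (since $A_{k}$ is $T$-stable). The key point is that a norm-limit of quasinilpotent \emph{compact} operators is again quasinilpotent: were the limit to have a nonzero spectral point, compactness would force it to be an isolated eigenvalue, and the associated Riesz projection would be a nonzero norm-limit of the Riesz projections of the approximants, each of which vanishes because the approximants have spectrum $\{0\}$ there, a contradiction. Hence $(\mu-A)^{-1}T$ is quasinilpotent for all large $\mu$, and the subharmonicity and Cartan argument already carried out in the proof of Lemma~\ref{lem:stable-inverse-quasinilpotent} propagates this to all $\mu\notin\sigma(A)$, so $A$ is $T$-stable.

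With $\overline{\mathcal{L}}$ now known to have $T$-stable spectrum, the trace computation of the first paragraph applies verbatim to every finite-rank $F\in\overline{\mathcal{L}}$, so $T$ is a nonzero element of $\overline{\mathcal{L}}$ with $\operatorname{tr}(TF)=0$ for all such $F$, and Theorem~\ref{thm:lie-zero-trace-ideal} finishes the proof. I expect everything except the closure step to be routine; within that step, the preservation of quasinilpotence under norm limits of compact operators is the one substantive fact, and it is exactly what makes the passage to $\overline{\mathcal{L}}$ legitimate here.
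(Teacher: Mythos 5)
Your proposal is correct and follows essentially the same route as the paper: reduce to the uniformly closed case via continuity of the spectrum on compact operators, apply Lemma~\ref{lem:rig-zer-tr} with the finite-rank element in the first slot to get $\operatorname{tr}(FT)=0$, and invoke Theorem~\ref{thm:lie-zero-trace-ideal} with witness $T$. The only difference is that you spell out the closure step (which the paper dispatches in one sentence) in full, and your argument there is sound.
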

\begin{proof}
By the continuity of the spectrum of compact operators, we may suppose
that $\mathcal{L}$ is uniformly closed. Let $\mathcal{F}$ be the
Lie ideal of finite-rank operators in $\mathcal{L}$. By Theorem \ref{lem:rig-zer-tr},
$\operatorname{tr}(FT)=0$ for all $F$ in $\mathcal{F}$, so the
result follows by Theorem \ref{thm:lie-zero-trace-ideal}.
\end{proof}
\begin{thm}
\label{thm:lie-sub-tri}A Lie algebra of compact operators with subadditive
spectrum is triangularizable.
\end{thm}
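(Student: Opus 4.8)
The plan is to apply the Triangularization Lemma to the property $\mathcal{P}$ that a family is a Lie algebra of compact operators with sublinear spectrum. Since a Lie algebra is in particular a linear space, the Remark following the definition of subadditive spectrum shows that our algebra $\mathcal{L}$, having subadditive spectrum, has sublinear spectrum, and so possesses property $\mathcal{P}$. Consequently, once I verify that $\mathcal{P}$ implies reducibility and is inherited by quotients, the Triangularization Lemma will force every family with property $\mathcal{P}$---and $\mathcal{L}$ in particular---to be triangularizable.

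Inheritance by quotients is the routine half. If $\mathcal{M}_1 \subseteq \mathcal{M}_2$ are invariant subspaces of a family with property $\mathcal{P}$, then the induced quotient operators again form a Lie algebra (the commutator passes to the quotient), they remain compact, and by Lemma \ref{lem:sub-spr-quo} the sublinearity of the spectrum survives. Hence the quotient family again has property $\mathcal{P}$.

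The real content is the reducibility of an arbitrary family $\mathcal{L}$ with property $\mathcal{P}$. First I would reduce to the uniformly closed case, replacing $\mathcal{L}$ by its uniform closure, which is still a Lie algebra of compact operators, still has sublinear spectrum by continuity of the spectrum on compact operators, and has precisely the same closed invariant subspaces as $\mathcal{L}$. Assuming now that $\mathcal{L}$ is uniformly closed, I would split on whether $\mathcal{L}$ contains a nonzero quasinilpotent element. If some nonzero $T \in \mathcal{L}$ is quasinilpotent, then for every $A \in \mathcal{L}$ sublinearity gives $\sigma(A + \lambda T) \subseteq \sigma(A) + \lambda\sigma(T) = \sigma(A)$ for all $\lambda$, so $A$ has $T$-stable spectrum; thus $\mathcal{L}$ has $T$-stable spectrum, and Lemma \ref{thm:lie-rig-red} yields reducibility. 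If instead $\mathcal{L}$ has no nonzero quasinilpotent element, then it contains no nonzero nilpotent finite-rank operator, so Theorem \ref{thm:lie-no-finite-ranks} makes it triangularizable, hence reducible.

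The proof is short because all the machinery is already in place; the one genuinely load-bearing idea is the dichotomy on the existence of a nonzero quasinilpotent element, which is exactly what links the stable-spectrum reducibility result (Lemma \ref{thm:lie-rig-red}) to the absence-of-nilpotent-finite-ranks theorem (Theorem \ref{thm:lie-no-finite-ranks}). The point to handle with care is the passage to the uniform closure: I must make sure reducibility of $\overline{\mathcal{L}}$ transfers back to $\mathcal{L}$, and that the property $\mathcal{P}$ fed into the Triangularization Lemma is not inadvertently required to build in uniform closedness (which is why the closure step lives inside the reducibility argument rather than in the statement of $\mathcal{P}$).
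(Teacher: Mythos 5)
Your proposal is correct and follows essentially the same route as the paper: reduce to reducibility via the Triangularization Lemma and Lemma \ref{lem:sub-spr-quo}, pass to the uniform closure, and then split into the case handled by Theorem \ref{thm:lie-no-finite-ranks} and the case where a nonzero quasinilpotent $T$ gives $T$-stable spectrum, invoking Lemma \ref{thm:lie-rig-red}. The only (immaterial) difference is that the paper's dichotomy is on the existence of a nonzero finite-rank nilpotent element rather than a nonzero quasinilpotent element; both versions make the two branches close in exactly the same way.
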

\begin{proof}
Let $\mathcal{L}$ be a Lie algebra of compact operators with sublinear
spectrum. By the Triangularization Lemma and Lemma \ref{lem:sub-spr-quo},
it suffices to show the reducibility of $\mathcal{L}$. As in the
proof of Lemma \ref{thm:lie-rig-red}, we may suppose that $\mathcal{L}$
is uniformly closed. Then by Theorem \ref{thm:lie-no-finite-ranks},
if $\mathcal{L}$ doesn't contain a nonzero finite-rank nilpotent
element, then it is reducible; hence we may suppose that some nonzero
$T$ in $\mathcal{L}$ is nilpotent, and of finite rank. Then by the
hypothesis of sublinear spectrum, $\mathcal{L}$ has $T$-stable spectrum,
so the result follows by Lemma \ref{thm:lie-rig-red}.
\end{proof}
\begin{thm}
A Lie algebra of compact operators with submultiplicative spectrum
is reducible.
\end{thm}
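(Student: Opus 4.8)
The plan is to follow the broad outline of the proof of Theorem~\ref{thm:lie-sub-tri}, exploiting the fact that a nilpotent element interacts with the submultiplicativity condition in an especially degenerate way. First I would invoke the continuity of the spectrum on the compact operators to reduce to the case that $\mathcal{L}$ is uniformly closed. I would then split into two cases according to whether $\mathcal{L}$ contains a nonzero finite-rank nilpotent element. If it does not, then Theorem~\ref{thm:lie-no-finite-ranks} immediately gives that $\mathcal{L}$ is triangularizable, hence reducible, and there is nothing further to prove.

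The substance of the argument is the remaining case, in which some nonzero $T$ in $\mathcal{L}$ is nilpotent and of finite rank. Here the submultiplicativity does the work. For any $F$ in $\mathcal{L}$ we have $\sigma(FT)\subseteq\sigma(F)\sigma(T)$, and since $T$ is nilpotent we have $\sigma(T)=\{0\}$, so the product set $\sigma(F)\sigma(T)$ collapses to $\{0\}$. Thus $FT$ is quasinilpotent. When $F$ is of finite rank the operator $FT$ is of finite rank as well, so its trace is the sum, with multiplicity, of its eigenvalues, all of which are zero; hence $\operatorname{tr}(FT)=0$. Using the symmetry $\operatorname{tr}(TF)=\operatorname{tr}(FT)$, this says precisely that the nonzero element $T$ satisfies $\operatorname{tr}(TF)=0$ for every finite-rank $F$ in $\mathcal{L}$. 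The reducibility of $\mathcal{L}$ then follows at once from Theorem~\ref{thm:lie-zero-trace-ideal}.

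In contrast to the sublinear case, I would not expect to need the notion of $T$-stable spectrum or Lemma~\ref{thm:lie-rig-red} at all: whereas sublinearity yields the trace condition only after passing through stability and the rigidity results of Section~\ref{sec:rigidity}, submultiplicativity produces the quasinilpotence of $FT$, and therefore the vanishing of $\operatorname{tr}(FT)$, directly. The only point requiring real care is the dichotomy itself, namely confirming that the case with no finite-rank nilpotents is genuinely covered by Theorem~\ref{thm:lie-no-finite-ranks}. I therefore anticipate that the main, and essentially the only, obstacle is conceptual rather than technical: recognizing that submultiplicativity tested against a single nilpotent element is exactly the hypothesis needed to feed Theorem~\ref{thm:lie-zero-trace-ideal}.
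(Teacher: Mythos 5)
Your argument coincides with the paper's proof: the same reduction to the uniformly closed case, the same dichotomy via Theorem~\ref{thm:lie-no-finite-ranks}, and the same observation that submultiplicativity against a finite-rank nilpotent $T$ forces $\sigma(FT)=\{0\}$ and hence $\operatorname{tr}(FT)=0$, so that Theorem~\ref{thm:lie-zero-trace-ideal} applies. Your remark that stability of the spectrum is not needed here is also consistent with the paper, which likewise bypasses Lemma~\ref{thm:lie-rig-red} in this case.
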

\begin{proof}
Let $\mathcal{L}$ be a Lie algebra of compact operators with submultiplicative
spectrum. As in the proof of Lemma \ref{thm:lie-rig-red}, we may
suppose that $\mathcal{L}$ is uniformly closed. Let $\mathcal{F}$
be the ideal of finite-rank elements in $\mathcal{L}$. By Theorem
\ref{thm:lie-no-finite-ranks}, if $\mathcal{L}$ doesn't contain
a nonzero finite-rank nilpotent element, then it is reducible; hence
we may suppose that some nonzero $T$ in $\mathcal{L}$ is nilpotent,
and of finite rank. Then for every $F$ in $\mathcal{F}$, $\sigma(FT)=0$
by the hypothesis of submultiplicative spectrum, so $\operatorname{tr}(FT)=0$.
Hence $\mathcal{L}$ is reducible by Theorem \ref{thm:lie-zero-trace-ideal}. 
\end{proof}
\begin{thm}
Let $\mathcal{L}$ be a uniformly closed Lie algebra of compact operators
with the property that $\operatorname{tr}(FGH)=\operatorname{tr}(FHG)$
whenever $F,G,H\in\mathcal{L}$ are of finite rank. Then $\mathcal{L}$
is reducible.
\end{thm}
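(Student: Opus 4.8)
The plan is to reduce the statement to Theorem \ref{thm:lie-zero-trace-ideal} by producing a nonzero element of $\mathcal{L}$ whose trace against every finite-rank element of $\mathcal{L}$ vanishes. Let $\mathcal{F}$ denote the Lie ideal of finite-rank operators in $\mathcal{L}$. The first step is to reinterpret the hypothesis: since the trace is cyclic on products of finite-rank operators, for $F,G,H\in\mathcal{F}$ we have $\operatorname{tr}(FGH)-\operatorname{tr}(FHG)=\operatorname{tr}(F[G,H])$, so the assumption is exactly that $\operatorname{tr}(FK)=0$ for every $F\in\mathcal{F}$ and every $K$ in $[\mathcal{F},\mathcal{F}]$, the linear span of commutators of elements of $\mathcal{F}$. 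Note that $[\mathcal{F},\mathcal{F}]\subseteq\mathcal{F}$, since a commutator of finite-rank operators is again finite rank.

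I would then split into cases according to the size of $\mathcal{F}$ and $[\mathcal{F},\mathcal{F}]$. If $\mathcal{F}=0$, then $\mathcal{L}$ contains no nonzero finite-rank element, in particular no nilpotent finite-rank element, so $\mathcal{L}$ is triangularizable by Theorem \ref{thm:lie-no-finite-ranks}. If $\mathcal{F}\ne0$ but $[\mathcal{F},\mathcal{F}]=0$, then $\mathcal{F}$ is a nonzero commutative Lie ideal of $\mathcal{L}$; being commutative it is an Engel Lie ideal, so $\mathcal{L}$ is reducible by Theorem \ref{thm:lie-engel-ideal}. The remaining case is $[\mathcal{F},\mathcal{F}]\ne0$: here I would pick any nonzero $A\in[\mathcal{F},\mathcal{F}]$. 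By the reformulation above, together with cyclicity of the trace, $\operatorname{tr}(AF)=\operatorname{tr}(FA)=0$ for every $F\in\mathcal{F}$, so $A$ is a nonzero element of $\mathcal{L}$ with vanishing trace against all finite-rank elements, and Theorem \ref{thm:lie-zero-trace-ideal} yields the reducibility of $\mathcal{L}$.

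The conceptual work is essentially all in the first step: recognizing that the symmetric trace condition is precisely the statement that finite-rank commutators are trace-orthogonal to all finite-rank elements, which feeds directly into Theorem \ref{thm:lie-zero-trace-ideal}. The only genuine branching is whether $[\mathcal{F},\mathcal{F}]$ vanishes, and I expect the only mild obstacle to be handling the degenerate cases $\mathcal{F}=0$ and $[\mathcal{F},\mathcal{F}]=0$ cleanly; the two structural theorems (\ref{thm:lie-no-finite-ranks} and \ref{thm:lie-engel-ideal}) dispatch these by identifying $\mathcal{F}$ as either absent or a commutative, hence Engel, ideal. No new subharmonicity or stability input is needed beyond what has already been established.
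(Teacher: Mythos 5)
Your proof is correct and follows essentially the same route as the paper's: rewrite the hypothesis as $\operatorname{tr}(F[G,H])=0$ for $F,G,H\in\mathcal{F}$, dispose of the case where $\mathcal{F}$ is commutative via Theorem \ref{thm:lie-engel-ideal}, and otherwise feed a nonzero commutator into Theorem \ref{thm:lie-zero-trace-ideal}. Your explicit treatment of the degenerate case $\mathcal{F}=0$ (via Theorem \ref{thm:lie-no-finite-ranks}) is in fact slightly more careful than the paper, which folds it into the commutative case even though Theorem \ref{thm:lie-engel-ideal} requires a nonzero ideal.
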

\begin{proof}
Let $\mathcal{F}$ be the ideal of finite-rank operators in $\mathcal{L}$.
If $\mathcal{F}$ is commutative, then it is an Engel ideal of $\mathcal{L}$,
and $\mathcal{L}$ is reducible by Theorem \ref{thm:lie-engel-ideal}.
Hence we may suppose that $[G,H]\ne0$ for some $G,H\in\mathcal{L}$.
Then for all $F$ in $\mathcal{F}$, $\operatorname{tr}(F[G,H])=0$,
so $\mathcal{L}$ is reducible by Theorem \ref{thm:lie-zero-trace-ideal}.
\end{proof}
\begin{thm}
Let $\mathcal{L}$ be a uniformly closed Lie algebra of compact operators
with the property that $\operatorname{tr}(F^{2})=0$ for every finite-rank
element $F$ in $\mathcal{L}$. Then $\mathcal{L}$ is reducible. 
\end{thm}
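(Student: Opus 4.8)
The plan is to reduce the statement to Theorem~\ref{thm:lie-zero-trace-ideal}, which already delivers reducibility as soon as one produces a nonzero $A\in\mathcal{L}$ with $\operatorname{tr}(AF)=0$ for every finite-rank $F\in\mathcal{L}$. Since the hypothesis is quadratic in $F$, the first (and essentially only) real step is a polarization. Writing $\mathcal{F}$ for the ideal of finite-rank operators in $\mathcal{L}$, I would take $F,G\in\mathcal{F}$ and expand $\operatorname{tr}((F+G)^{2})=0$. Using $\operatorname{tr}(FG)=\operatorname{tr}(GF)$ together with $\operatorname{tr}(F^{2})=\operatorname{tr}(G^{2})=0$, this collapses to $2\operatorname{tr}(FG)=0$, so that
\[
\operatorname{tr}(FG)=0\quad\mbox{for all }F,G\in\mathcal{F}.
\]
Here is where the scalar field being $\mathbb{C}$ (so that one may divide by $2$) is used.

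With this bilinear trace identity in hand, I would split into two cases according to whether $\mathcal{F}$ is zero. If $\mathcal{F}=0$, then $\mathcal{L}$ contains no nonzero finite-rank nilpotent element, so Theorem~\ref{thm:lie-no-finite-ranks} shows $\mathcal{L}$ is triangularizable and in particular reducible. If $\mathcal{F}\ne0$, then I would choose any nonzero $A\in\mathcal{F}$; by the displayed identity, $A$ is a nonzero element of $\mathcal{L}$ satisfying $\operatorname{tr}(AF)=0$ for every finite-rank $F\in\mathcal{L}$, and Theorem~\ref{thm:lie-zero-trace-ideal} then gives the reducibility of $\mathcal{L}$ at once.

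I do not expect a genuine obstacle in this argument: the entire weight of the result rests on the preceding theorems, and the proof should be essentially as short as the two results immediately before it, which follow the same template of manufacturing a single element annihilating all finite-rank elements under the trace pairing. The only points needing care are the appeal to $\mathbb{C}$ in the polarization and the degenerate case $\mathcal{F}=0$, which must be dispatched by the finite-rank/Engel machinery rather than by the trace condition itself. If anything is subtle, it is merely confirming that the element $A$ selected in the second case may be taken of finite rank, which is exactly what makes $\operatorname{tr}(AF)$ well-defined and forces it to vanish against all of $\mathcal{F}$.
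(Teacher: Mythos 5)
Your proposal is correct and follows essentially the same route as the paper: polarize $\operatorname{tr}((F+G)^2)=0$ to get $\operatorname{tr}(FG)=0$ on the finite-rank ideal $\mathcal{F}$, dispose of the case $\mathcal{F}=0$ via Theorem~\ref{thm:lie-no-finite-ranks}, and otherwise feed a nonzero element of $\mathcal{F}$ into Theorem~\ref{thm:lie-zero-trace-ideal}. No substantive difference from the paper's argument.
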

\begin{proof}
Let $\mathcal{F}$ be the ideal of finite-rank operators in $\mathcal{L}$.
Then for any $F,G\in\mathcal{F}$,\begin{eqnarray*}
0 & = & \operatorname{tr}((F+G)^{2})\\
 & = & \operatorname{tr}(F^{2})+\operatorname{tr}(G^{2})+\operatorname{tr}(FG+GF)\\
 & = & \operatorname{tr}(FG+GF)\\
 & = & 2\operatorname{tr}(FG),\end{eqnarray*}
so $\operatorname{tr}(FG)=0$. If $\mathcal{F}=0$, then $\mathcal{L}$
is reducible by Theorem \ref{thm:lie-no-finite-ranks}. Otherwise,
$\mathcal{L}$ is reducible by Theorem \ref{thm:lie-zero-trace-ideal}.
\end{proof}

\section{Spectral Conditions on a Jordan Algebra of Compact Operators}

In this section we turn our attention to Jordan algebras, and show
that most of the results we obtained for Lie algebras are also true
in this setting.

An operator Jordan algebra $\mathcal{J}$ is a subspace of $\mathcal{B}(\mathcal{X})$
which is closed under the Jordan anticommutator product $\{A,B\}=AB-BA$,
for $A,B\in\mathcal{J}$. It is easy to verify that this is equivalent
to $\mathcal{J}$ being closed under taking positive integer powers.
A Jordan ideal $\mathcal{I}$ of $\mathcal{J}$ is a Jordan subalgebra
of $\mathcal{J}$ such that $\{A,B\}\in\mathcal{I}$ whenever $A\in\mathcal{J}$
and $B\in\mathcal{I}$.

The methods of this section will differ from those used in the section
on Lie algebras. This is mainly because for obtaining reducibility
results, the closest Jordan analogue of an Engel Lie ideal will be
a Jordan ideal in which every element is quasinilpotent. This is due
to the following result, the Jordan analogue of Theorem \ref{thm:lie-engel-ideal},
which was recently obtained in \textbf{\cite[Theorem 11.3]{ken2}.}

\begin{thm}
\textbf{\label{thm:jor-volterra-ideal}}A Jordan algebra of compact
operators with a nonzero ideal of quasinilpotent operators is reducible.
In particular, a Jordan algebra of compact quasinilpotent operators
is triangularizable.
\end{thm}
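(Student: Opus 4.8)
The plan is to prove reducibility; the triangularizability in the second sentence then follows from the Triangularization Lemma applied to the class of Jordan algebras of compact quasinilpotent operators, which is inherited by quotients (quotients of compact operators are compact, the quotient map does not increase the spectral radius so quasinilpotence is preserved, and the Jordan structure passes to the induced operators). By continuity of the spectrum on compact operators I may assume throughout that $\mathcal{J}$ is uniformly closed and that the quasinilpotent Jordan ideal $\mathcal{I}$ is nonzero and closed. This theorem is the Jordan counterpart of Theorem \ref{thm:lie-engel-ideal}, and the bulk of the difficulty lies in the same place.

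First I would extract invariant subspaces directly from the ideal. For $A\in\mathcal{I}$ and $B\in\mathcal{J}$ the ideal condition gives $\{A,B\}=AB+BA\in\mathcal{I}$, and since $\mathcal{I}$ is closed under squares the identity $\{A,\{A,B\}\}-\{A^{2},B\}=2ABA$ shows moreover that $ABA\in\mathcal{I}$. Using the first relation one verifies that the common kernel $\mathcal{N}=\bigcap_{A\in\mathcal{I}}\ker A$ and the closed span $\mathcal{M}$ of $\bigcup_{A\in\mathcal{I}}\operatorname{ran}(A)$ are both invariant under $\mathcal{J}$: if $x\in\mathcal{N}$ then $ABx=\{A,B\}x-BAx=0$ for every $A\in\mathcal{I}$, so $Bx\in\mathcal{N}$; and for a range vector $Ax$ one has $BAx=\{A,B\}x-ABx$, with both terms lying in $\mathcal{M}$. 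Since $\mathcal{I}\neq0$ forces $\mathcal{N}\neq\mathcal{X}$ and $\mathcal{M}\neq0$, the theorem is proved unless $\mathcal{N}=0$ and $\mathcal{M}=\mathcal{X}$ simultaneously.

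The remaining case, in which $\mathcal{I}$ acts with trivial common kernel and dense total range, already occurs for a single classical Volterra operator and so cannot be resolved by such first-order considerations; it genuinely requires the quasinilpotence of $\mathcal{I}$. A useful piece of structure here is the following trace identity: for $A\in\mathcal{I}$ and any finite-rank $F\in\mathcal{J}$ the element $\{A,F\}=AF+FA$ lies in $\mathcal{I}$ and is therefore compact, quasinilpotent, and of finite rank, hence nilpotent, so $0=\operatorname{tr}(AF+FA)=2\operatorname{tr}(AF)$ and thus every element of $\mathcal{I}$ is trace-orthogonal to the finite-rank part of $\mathcal{J}$. This is precisely the configuration that drives reducibility in the Lie setting via Theorem \ref{thm:lie-zero-trace-ideal}, and I would run the Jordan analogue of that trace argument; but that Jordan reduction, like the present theorem, ultimately rests on the core fact addressed next.

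The hard part, and the core of the whole result, is the reducibility of a nonzero Jordan algebra of Volterra operators itself; this is the Jordan analogue of Shulman and Turovskii's Lie theorem \cite{shu2} and does not reduce to the elementary arguments above. Here I would imitate the spectral methods of Section \ref{sec:rigidity}: using Vesentini's subharmonicity of the spectral radius \cite{ves}, I would try to show that $r(\cdot)$ is subadditive and submultiplicative across $\mathcal{I}$, forcing the associative algebra generated by $\mathcal{I}$ to consist of quasinilpotent operators. Such a Volterra associative algebra of compact operators is triangularizable by the classical theory, and its invariant subspaces can be pushed back to $\mathcal{J}$ by means of the ideal identities above. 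Establishing this spectral subadditivity and submultiplicativity in the Jordan setting is the substantive obstacle and is where the radical-theoretic machinery for Jordan algebras of compact operators is really needed; the reductions above serve only to isolate it as the single remaining difficulty.
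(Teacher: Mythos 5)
There is a genuine gap, and you have essentially flagged it yourself. The parts you actually prove are correct: the invariance under all of $\mathcal{J}$ of the common kernel $\mathcal{N}=\bigcap_{A\in\mathcal{I}}\ker A$ and of the closed span $\mathcal{M}$ of the ranges follows from $\{A,B\}\in\mathcal{I}$ exactly as you write, and the observation that $\operatorname{tr}(AF)=0$ for $A\in\mathcal{I}$ and finite-rank $F\in\mathcal{J}$ is valid. But these dispose only of the degenerate cases; the entire content of the theorem lives in the case $\mathcal{N}=0$, $\mathcal{M}=\mathcal{X}$ (already realized by a single classical Volterra operator, as you note), and there you offer only an intention: ``I would try to show that $r(\cdot)$ is subadditive and submultiplicative across $\mathcal{I}$.'' That is not a step of a proof --- it \emph{is} the theorem. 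Subadditivity of the spectral radius on a Volterra Jordan algebra is essentially equivalent to the triangularizability you are trying to establish, and nothing in Section \ref{sec:rigidity} supplies it: those results concern a pair $(A,T)$ already known to have $T$-stable spectrum, whereas here no inequality $r(A+\lambda B)\le r(A)$ is available a priori for two elements of $\mathcal{I}$. The alternative route you mention, through the Jordan analogue of Theorem \ref{thm:lie-zero-trace-ideal}, is circular in the context of this paper: Theorem \ref{lem:jor-tr-idl} is itself proved by invoking Theorem \ref{thm:jor-volterra-ideal}. Even granting the quasinilpotence of the associative algebra generated by $\mathcal{I}$, you would still need an argument to convert an invariant subspace of that algebra into one for all of $\mathcal{J}$ outside the $\mathcal{N}$/$\mathcal{M}$ dichotomy; this is not automatic.

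For calibration: the paper does not prove this theorem either. It is imported wholesale from \cite[Theorem 11.3]{ken2}, precisely because the Volterra case requires the radical-theoretic and joint-spectral-radius machinery developed there (the Jordan counterpart of the Shulman--Turovskii theory of \cite{shu2}), not an elementary reduction. So your proposal is an honest and correct framing of where the difficulty sits, but it does not constitute a proof of the statement.
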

Let $\mathcal{J}$ be a uniformly closed Jordan algebra of compact
operators, and let $A$ be a non-quasinilpotent element of $\mathcal{J}$.
For nonzero $\lambda$ in $\sigma(A)$, it is well known that the
Riesz projection $P_{\lambda}$ of $A$ corresponding to $\lambda$
is of finite rank, and moreover that it may be written as a uniform
limit of polynomials in $A$ with zero constant coefficient. It follows
that $P_{\lambda}\in\mathcal{J}$. This fact, combined with Theorem
\ref{thm:jor-volterra-ideal}, implies the following result.

\begin{thm}
\label{thm:jor-no-finite-rank}A uniformly closed Jordan algbra of
compact operators which doesn't contain any nonzero finite-rank operators
is reducible.
\end{thm}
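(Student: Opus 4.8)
The plan is to show that the hypothesis forbidding nonzero finite-rank operators is strong enough to force every element of $\mathcal{J}$ to be quasinilpotent, at which point the conclusion follows immediately from Theorem \ref{thm:jor-volterra-ideal}. The whole argument is really a contrapositive built on the Riesz-projection observation recorded in the paragraph immediately preceding the statement, so I expect the proof to be short.

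First I would set up the dichotomy: either every element of $\mathcal{J}$ is quasinilpotent, or $\mathcal{J}$ contains some non-quasinilpotent element $A$. In the latter case, since $A$ is compact, $\sigma(A)$ contains a nonzero point $\lambda$, which is necessarily an eigenvalue of finite multiplicity. The associated Riesz projection $P_{\lambda}$ is therefore a nonzero operator of finite rank, and, as noted just before the statement, it is a uniform limit of polynomials in $A$ with zero constant term, whence $P_{\lambda}\in\mathcal{J}$. This exhibits a nonzero finite-rank member of $\mathcal{J}$, directly contradicting the hypothesis. Hence the second alternative cannot occur, and every element of $\mathcal{J}$ must be quasinilpotent.

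Once every element of $\mathcal{J}$ is quasinilpotent, $\mathcal{J}$ is itself a Jordan algebra of compact quasinilpotent operators (assuming $\mathcal{J}\neq 0$, the remaining case being trivially reducible), so Theorem \ref{thm:jor-volterra-ideal} applies and yields that $\mathcal{J}$ is triangularizable, and in particular reducible. I do not anticipate any genuine obstacle: the only point requiring care is the passage from a nonzero spectral value of a compact operator to an honest nonzero finite-rank Riesz projection belonging to $\mathcal{J}$, and this has already been supplied in the discussion preceding the theorem. Thus the proof amounts to assembling that observation with the dichotomy above and invoking Theorem \ref{thm:jor-volterra-ideal}.
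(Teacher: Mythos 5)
Your proposal is correct and is essentially the paper's own argument: the paper only sketches the proof in the paragraph preceding the theorem, noting that a non-quasinilpotent element would yield a nonzero finite-rank Riesz projection lying in $\mathcal{J}$, so the hypothesis forces all elements to be quasinilpotent and Theorem \ref{thm:jor-volterra-ideal} applies. You have simply written out that same contrapositive in full.
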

For Jordan algebras, we are also interested in situations when the
reducibility of this ideal implies the reducibility of the entire
Jordan algebra. The next result establishes the Jordan analogue of
Theorem \ref{thm:lie-zero-trace-ideal}.

\begin{thm}
\label{lem:jor-tr-idl}Let $\mathcal{J}$ be a uniformly closed Jordan
algebra of compact operators. If $\mathcal{J}$ contains a nonzero
element $A$ with the property that $\operatorname{tr}(AF)=0$ for
every finite-rank operator $F$ in $\mathcal{J}$, then $\mathcal{J}$
is reducible.
\end{thm}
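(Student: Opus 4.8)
The plan is to mirror the proof of the Lie analogue, Theorem~\ref{thm:lie-zero-trace-ideal}: the goal is to produce a nonzero Jordan ideal of $\mathcal{J}$ consisting of quasinilpotent operators, and then to conclude with Theorem~\ref{thm:jor-volterra-ideal}, which plays the role here that the Engel-ideal theorem played in the Lie case. Let $\mathcal{F}$ denote the Jordan ideal of finite-rank operators in $\mathcal{J}$, and put
\[
\mathcal{I}=\{B\in\mathcal{J}:\operatorname{tr}(BF)=0\mbox{ for all }F\in\mathcal{F}\}.
\]
The first step is to check that $\mathcal{I}$ is a Jordan ideal of $\mathcal{J}$. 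As in the Lie case, this rests on the trace identity $\operatorname{tr}(\{B,C\}F)=\operatorname{tr}(B\{C,F\})$, which is immediate from the cyclic invariance of the trace once one writes out $\{B,C\}=BC+CB$ and $\{C,F\}=CF+FC$. Since $\{C,F\}\in\mathcal{F}$ whenever $C\in\mathcal{J}$ and $F\in\mathcal{F}$, this identity gives $\{B,C\}\in\mathcal{I}$ for every $B\in\mathcal{I}$ and $C\in\mathcal{J}$. By hypothesis $A\in\mathcal{I}$, so $\mathcal{I}\neq0$, and $\mathcal{I}$ is uniformly closed because for each fixed finite-rank $F$ the functional $B\mapsto\operatorname{tr}(BF)$ is norm-continuous.

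I would then consider the Jordan ideal $\mathcal{I}_{\mathcal{F}}=\mathcal{I}\cap\mathcal{F}$ of $\mathcal{J}$ and split into two cases. Suppose first that $\mathcal{I}_{\mathcal{F}}=0$, so that $\mathcal{I}$ contains no nonzero finite-rank operator. I claim that then every element of $\mathcal{I}$ is quasinilpotent. Indeed, if some $B\in\mathcal{I}$ had a nonzero spectral value $\lambda$, its Riesz projection $P_{\lambda}$ would be a nonzero finite-rank operator that can be written as a uniform limit of polynomials in $B$ with zero constant coefficient; since $\mathcal{I}$ is a uniformly closed Jordan subalgebra containing $B$, it is closed under such polynomials and hence contains $P_{\lambda}$, forcing $P_{\lambda}\in\mathcal{I}_{\mathcal{F}}$ --- a contradiction. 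Thus $\mathcal{I}$ is a nonzero Jordan ideal of quasinilpotent operators, and $\mathcal{J}$ is reducible by Theorem~\ref{thm:jor-volterra-ideal}.

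Now suppose $\mathcal{I}_{\mathcal{F}}\neq0$. For any $B,C\in\mathcal{I}_{\mathcal{F}}$ we have $\operatorname{tr}(BC)=0$, and because a Jordan algebra is closed under taking powers, every power $B^{n}$ again lies in $\mathcal{I}_{\mathcal{F}}$. Writing $B^{k}=B^{k-1}B$ with both factors in $\mathcal{I}_{\mathcal{F}}$ then yields $\operatorname{tr}(B^{k})=0$ for all $k\geq2$. A finite-rank operator whose power traces $\operatorname{tr}(B^{k})$ vanish for all $k\geq2$ can have no nonzero eigenvalue, since the power sums of its nonzero eigenvalues would all vanish (a Vandermonde argument), and so it is nilpotent. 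Hence $\mathcal{I}_{\mathcal{F}}$ is a nonzero Jordan ideal of nilpotent operators, and Theorem~\ref{thm:jor-volterra-ideal} again gives the reducibility of $\mathcal{J}$.

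The step I expect to be the crux is the first case: passing from the mere absence of finite-rank operators in $\mathcal{I}$ to the quasinilpotence of all of $\mathcal{I}$. This is where the uniform closedness of $\mathcal{I}$ and the Riesz-projection description of spectral idempotents are essential, and it is the Jordan substitute for the appeal to the no-nilpotents theorem (Theorem~\ref{thm:lie-no-finite-ranks}) in the Lie proof. The second case, by contrast, turns out to be cleaner than its Lie counterpart: closure under powers lets me read off the nilpotence of each element of $\mathcal{I}_{\mathcal{F}}$ directly from the vanishing of its power traces, so no analogue of the commutator-subalgebra argument of \cite{ken1} is required.
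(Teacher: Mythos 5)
Your proof is correct, and its overall strategy coincides with the paper's: form the ideal $\mathcal{I}=\{B\in\mathcal{J}:\operatorname{tr}(BF)=0\ \text{for all }F\in\mathcal{F}\}$, verify via the identity $\operatorname{tr}(\{B,C\}F)=\operatorname{tr}(B\{C,F\})$ that it is a Jordan ideal, show it yields a nonzero quasinilpotent Jordan ideal, and invoke Theorem \ref{thm:jor-volterra-ideal}. Where you diverge is in how quasinilpotence is established: you import the two-case structure of the Lie proof (Theorem \ref{thm:lie-zero-trace-ideal}), handling $\mathcal{I}\cap\mathcal{F}=0$ via uniform closedness of $\mathcal{I}$ and the Riesz-projection argument, and $\mathcal{I}\cap\mathcal{F}\neq0$ via vanishing power traces $\operatorname{tr}(B^{k})=0$ for $k\geq2$ and a Vandermonde argument. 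Both cases are sound (in the second, note that $B^{n}\in\mathcal{I}\cap\mathcal{F}$ indeed follows since $\{B,B^{n-1}\}=2B^{n}$, and vanishing of the power sums for $k=2,\dots,m+1$ already forces all multiplicities to be zero). The paper avoids the case split entirely with one observation you did not exploit: to contradict $A\in\mathcal{I}$ having a nonzero spectral value $\lambda$, one only needs $P_{\lambda}\in\mathcal{F}$ (not $P_{\lambda}\in\mathcal{I}$), since $\operatorname{tr}(AP_{\lambda})=\operatorname{tr}(P_{\lambda}AP_{\lambda})=n\lambda\neq0$ directly violates the defining property of $\mathcal{I}$. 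This shows in one stroke that every element of $\mathcal{I}$ is quasinilpotent, so your case distinction, while valid, is avoidable; on the other hand, your second case is a nice self-contained substitute for the appeal to exterior results, and correctly identifies that closure under powers makes the Jordan setting cleaner than the Lie one here.
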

\begin{proof}
Let $\mathcal{F}$ denote the Jordan ideal of finite-rank elements
in $\mathcal{J}$, and let $\mathcal{I}=\{A\in\mathcal{J}:$$\operatorname{tr}(AF)=0\mbox{ for all }F\in\mathcal{F}\}$.
Using the identity\[
\operatorname{tr}(\{A,B\}F)=\operatorname{tr}(A\{B,F\})=0,\]
It is straightforward to verify that $\mathcal{I}$ is a Jordan ideal
of $\mathcal{J}$. We claim that $\mathcal{I}$ consists of quasinilpotent
elements. Indeed, suppose otherwise that for some $A\in\mathcal{I}$,
$\lambda\in\sigma(A)$ is nonzero, and let $P_{\lambda}$ be the Riesz
projection of $A$ corresponding to $\lambda$. Then $P_{\lambda}$
belongs to $\mathcal{J}$, and $\operatorname{tr}(P_{\lambda}AP_{\lambda})=n\lambda$,
where $n$ is the spectral multiplicity of $\lambda$. But\[
\operatorname{tr}(P_{\lambda}AP_{\lambda})=\operatorname{tr}(AP_{\lambda}^{2})=\operatorname{tr}(AP_{\lambda})=0\]
by hypothesis, which gives a contradiction. The result now follows
by Theorem \ref{thm:jor-volterra-ideal}.
\end{proof}
Using the structure which is present in a general Jordan algebra of
compact operators, and applying what we know about operators with
stable spectrum, we are able to prove the following result.

\begin{lem}
Let $\mathcal{J}$ be a Jordan algebra of compact operators with $T$
in $\mathcal{J}$. If $\mathcal{J}$ has $T$-stable spectrum, then
$AT$ is quasinilpotent for all $A\in\mathcal{J}$.
\end{lem}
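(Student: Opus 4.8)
The plan is to exploit the quadratic structure of a special Jordan algebra to build, out of $A$ and $T$, a whole family of elements of $\mathcal{J}$ to which the rigidity results of Section~\ref{sec:rigidity} apply. First I would reduce to the case that $\mathcal{J}$ is uniformly closed; this is harmless by the continuity of the spectrum on the compact operators (exactly as in the proof of Lemma~\ref{thm:lie-rig-red}), and it guarantees that norm-convergent series of elements of $\mathcal{J}$, and the Riesz projections of its elements, again lie in $\mathcal{J}$. The key structural observation is that $\mathcal{J}$, being a linear space closed under squaring, contains $BCB$ for all $B,C\in\mathcal{J}$: indeed $BC+CB=(B+C)^{2}-B^{2}-C^{2}\in\mathcal{J}$, similarly $B^{2}C+CB^{2}\in\mathcal{J}$, and hence
\[
BCB=\tfrac12\bigl(B(BC+CB)+(BC+CB)B-(B^{2}C+CB^{2})\bigr)\in\mathcal{J}.
\]
Applying this alternately with $B=A$ and $B=T$, I obtain $A(TA)^{n-1}\in\mathcal{J}$ for every $n\ge 1$. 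Each such element is $T$-stable, because $\mathcal{J}$ has $T$-stable spectrum, and, crucially, its product with $T$ on the right is exactly $(AT)^{n}$.

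The heart of the argument is the finite-rank case. Suppose $A$, and hence every $A(TA)^{n-1}$, is of finite rank. Then $A(TA)^{n-1}$ is a finite-rank, $T$-stable operator, so Lemma~\ref{lem:rig-zer-tr}, applied with this operator in the role of the finite-rank factor and $T$ in the role of the other, gives $\operatorname{tr}\!\bigl(A(TA)^{n-1}T\bigr)=\operatorname{tr}\bigl((AT)^{n}\bigr)=0$ for all $n\ge1$. Thus every power-sum of the eigenvalues of the finite-rank operator $AT$ vanishes, which forces all of its eigenvalues to be zero, so $AT$ is nilpotent. I would stress that this step genuinely uses the $T$-stability of \emph{all} the power products $A(TA)^{n-1}$, not merely of $A$: mere $T$-stability of $A$ is far too weak, since Example~\ref{exa:pre-example} already exhibits an $A$ that is $B$-stable with $AB$ \emph{not} nilpotent.

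The main obstacle is the passage from finite rank to an arbitrary compact $A$, since a compact element of $\mathcal{J}$ need not be approximable by finite-rank elements \emph{within} $\mathcal{J}$, and $(AT)^{n}$ need not be of trace class, so the trace identities above are no longer literally available. My plan here is to package the power products into the generating function
\[
\Phi(\nu)=\sum_{n\ge 1}\nu^{\,n-1}A(TA)^{n-1}=(1-\nu AT)^{-1}A,
\]
which, by the previous paragraph and uniform closedness, lies in $\mathcal{J}$ for $|\nu|<1/r(AT)$ and is therefore $T$-stable there. If $r(AT)>0$ this disc is bounded and $\Phi$ has a genuine singularity on its boundary, at $\nu=1/\mu$ for a peripheral eigenvalue $\mu$ of $AT$. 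The aim is to turn this singularity into a contradiction by studying the subharmonic function $\nu\mapsto\log r(\Phi(\nu))$ together with the rigidity of $\Phi(\nu)$ inside $\mathcal{J}$ (via Lemma~\ref{lem:stable-inverse-const} and H.~Cartan's Theorem), or, alternatively, by feeding the finite-rank Riesz projections of $\Phi(\nu)\in\mathcal{J}$ back into the finite-rank case.

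I expect this last point to be the delicate one, precisely because $T$-stability of a single element controls only the resolvent products $(\mu-\Phi(\nu))^{-1}T$ rather than the product $\Phi(\nu)T$ itself; so the argument must exploit that the \emph{entire} analytic family $\Phi(\nu)$ sits in $\mathcal{J}$, and not merely the membership of $A$. Once $AT$ is shown to be quasinilpotent for every $A$, the intended application is immediate: letting $A$ range over the finite-rank elements $F$ of $\mathcal{J}$ gives $\operatorname{tr}(TF)=\operatorname{tr}(FT)=0$, so the nonzero element $T$ satisfies the hypothesis of Theorem~\ref{lem:jor-tr-idl} and $\mathcal{J}$ is reducible.
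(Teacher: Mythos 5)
Your argument splits into a finite-rank case and a general compact case, and only the first is actually carried out. The finite-rank half is correct and is a genuinely different route from the paper's: the observation that $BCB\in\mathcal{J}$ puts every $A(TA)^{n-1}$ in $\mathcal{J}$, each of these is $T$-stable by hypothesis, and Lemma \ref{lem:rig-zer-tr} then gives $\operatorname{tr}\bigl((AT)^{n}\bigr)=0$ for all $n\ge1$, which kills all eigenvalues of the finite-rank operator $AT$. That is a clean trace argument. But the lemma is stated for arbitrary compact $A$, and for compact non-finite-rank $A$ you offer only a plan: introduce $\Phi(\nu)=(1-\nu AT)^{-1}A$ and hope to derive a contradiction from a boundary singularity. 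You never say what quantity becomes contradictory, and you yourself identify the obstruction correctly: $T$-stability of $\Phi(\nu)$ only controls resolvent products $(\mu-\Phi(\nu))^{-1}T$, not $\Phi(\nu)T$. As written, the proof of the stated lemma is incomplete.

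The missing idea, which is how the paper closes exactly this gap, is to apply the resolvent machinery to $(1-\nu A)^{-1}$ rather than to a resolvent of $AT$. Since $(1-\nu A)^{-1}-1=\nu A+\nu^{2}A^{2}+\cdots$ is, for small $\nu$, a norm limit of polynomials in $A$ with zero constant term, the hypothesis gives $\sigma\bigl((1-\nu A)^{-1}+\lambda T\bigr)=\sigma\bigl((1-\nu A)^{-1}\bigr)$, i.e.\ $(1-\nu A)^{-1}$ is $T$-stable. Now Lemma \ref{lem:stable-inverse-quasinilpotent}, evaluated at the point $\mu=0\notin\sigma\bigl((1-\nu A)^{-1}\bigr)$, produces the \emph{honest} product $\bigl(0-(1-\nu A)^{-1}\bigr)^{-1}T=-(1-\nu A)T$, so $T-\nu AT$ is quasinilpotent for small $\nu$; H.\ Cartan's Theorem extends this to all $\nu$, whence $T$ is $AT$-stable and Lemma \ref{lem:stable-quasinilpotence} gives that $AT$ is quasinilpotent. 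The point you were missing is precisely this evaluation at $\mu=0$, which converts resolvent information into product information because the inverse of $(1-\nu A)^{-1}$ is explicitly known. If you only need the lemma for its application to reducibility (traces against finite-rank $F$), your finite-rank argument suffices; but to prove the lemma as stated you need something like the paper's device.
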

\begin{proof}
For $A\in\mathcal{J}$, $\nu A+\nu^{2}A^{2}+...\nu^{n}A^{n}+\lambda T\in\mathcal{J}$,
for all $n\geq1$ and $\lambda,\nu\in\mathbb{C}$. Also, \[
\sigma(\nu A+\nu^{2}A^{2}+...\nu^{n}A^{n}+\lambda T)=\sigma(\nu A+\nu^{2}A^{2}+...\nu^{n}A^{n}),\]
so\[
\sigma(1+\nu A+\nu^{2}A^{2}+...\nu^{n}A^{n}+\lambda T)=\sigma(1+\nu A+\nu^{2}A^{2}+...\nu^{n}A^{n}).\]
For sufficiently small $\nu$, taking $n\to\infty$, and using the
continuity of the spectrum of compact operators gives\[
\sigma((1-\nu A)^{-1}+\lambda T)=\sigma((1-\nu A)^{-1}).\]
Hence $(1-\nu A)^{-1}$ has $T$-stable spectrum, so by Lemma \ref{lem:stable-inverse-quasinilpotent},
$(1-\nu A)T=T-\nu AT$ is quasinilpotent for sufficiently small $\nu$.
For such $\nu$, the subharmonic function $\nu\to\operatorname{log}(r(T-\nu AT))$
satisfies $\operatorname{log}(r(T-\nu AT))=-\infty$, so by H. Cartan's
Theorem, $\operatorname{log}(r(T-\nu AT))=-\infty$, i.e. $T-\nu AT$
is quasinilpotent for all $\nu\in\mathbb{C}$. But this means $T$
has $AT$-stable spectrum, so $AT$ is quasinilpotent by Lemma \ref{lem:stable-quasinilpotence}.
\end{proof}
\begin{lem}
\label{thm:jor-stable-reducible}Let $\mathcal{J}$ be a Jordan algebra
of compact operators, and let $T$ be a nonzero element of $\mathcal{J}$.
If $\mathcal{J}$ has $T$-stable spectrum, then $\mathcal{J}$ is
reducible.
\end{lem}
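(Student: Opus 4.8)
The plan is to mirror the strategy used for the Lie-algebra analogue in Lemma \ref{thm:lie-rig-red}: exhibit a single nonzero element of $\mathcal{J}$ whose trace against every finite-rank operator in $\mathcal{J}$ vanishes, and then apply the trace-based reducibility criterion of Theorem \ref{lem:jor-tr-idl}. The obvious candidate for this distinguished element is $T$ itself.

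First I would reduce to the case where $\mathcal{J}$ is uniformly closed. Since the Jordan product is norm-continuous, the uniform closure $\overline{\mathcal{J}}$ is again a Jordan algebra of compact operators, it still contains the nonzero element $T$, and by the continuity of the spectrum on the compact operators the inequality $r(A+\lambda T)\le r(A)$ passes to limits, so $\overline{\mathcal{J}}$ again has $T$-stable spectrum. Any nontrivial invariant subspace for $\overline{\mathcal{J}}$ is in particular invariant for $\mathcal{J}$, so it suffices to treat the closed case.

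Next I would show that $\operatorname{tr}(TF)=0$ for every finite-rank $F$ in $\mathcal{J}$. Here I would invoke the preceding lemma, which gives that $FT$ is quasinilpotent for every $F\in\mathcal{J}$; this is precisely the point at which the Jordan structure (rather than the ad-quasinilpotence available in the Lie setting) is exploited, and it rests on the subharmonicity arguments of Section \ref{sec:rigidity}. When $F$ is of finite rank, $FT$ is of finite rank as well, so its trace equals the sum of its eigenvalues, all of which vanish by quasinilpotence; hence $\operatorname{tr}(TF)=\operatorname{tr}(FT)=0$. Alternatively, since each finite-rank $F$ is itself $T$-stable, Lemma \ref{lem:rig-zer-tr} applied with $A=F$, $B=T$ gives $\operatorname{tr}(FT)=0$ directly, exactly as in the Lie case.

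With the trace condition established, Theorem \ref{lem:jor-tr-idl}, applied with $A=T$, immediately yields the reducibility of $\mathcal{J}$. The one genuinely load-bearing ingredient is the quasinilpotence of the products $FT$, which is the content of the previous lemma; once that is in hand, the remaining steps are routine. Consequently I do not anticipate a serious obstacle beyond correctly justifying the reduction to the uniformly closed case (ensuring $\overline{\mathcal{J}}$ is still a Jordan algebra with $T$-stable spectrum and that $T$ survives as the nonzero trace-orthogonal element) and confirming that finite rank together with quasinilpotence forces the trace to vanish.
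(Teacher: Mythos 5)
Your proof is correct, but it is not the route the paper takes; it is in fact the more direct argument, identical to the one the paper uses for the Lie analogue in Lemma \ref{thm:lie-rig-red}. You take $T$ itself as the distinguished element and verify $\operatorname{tr}(FT)=0$ for every finite-rank $F\in\mathcal{J}$, either by applying Lemma \ref{lem:rig-zer-tr} with $A=F$ (finite rank and $T$-stable) and $B=T$, or by observing that $FT$ is quasinilpotent (by the preceding lemma) and of finite rank, hence of trace zero; then Theorem \ref{lem:jor-tr-idl} applies. The paper instead first disposes of the case $\mathcal{F}=0$ via Theorem \ref{thm:jor-no-finite-rank}, applies Lemma \ref{lem:rig-zer-tr} to the finite-rank elements $\{A,F\}$ to get $\operatorname{tr}(\{A,F\}T)=\operatorname{tr}(\{F,T\}A)=0$, and then splits into cases: if some $\{F,T\}\ne 0$ it serves as the trace-orthogonal element, and if $\{F,T\}=0$ for all $F$ then $T$ does. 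Since each finite-rank $F$ is itself a $T$-stable element of $\mathcal{J}$, your direct application of Lemma \ref{lem:rig-zer-tr} is legitimate, and it renders both the case split and the separate treatment of $\mathcal{F}=0$ unnecessary (the hypothesis of Theorem \ref{lem:jor-tr-idl} is vacuously met when $\mathcal{F}=0$). Your reduction to the uniformly closed case is also handled correctly and matches the paper's. In short: same skeleton (closure, trace-orthogonality, trace-ideal theorem), but your choice of distinguished element and of how to kill the traces is simpler and makes the parallel with the Lie case explicit; the paper's version yields the slightly stronger intermediate fact that $\operatorname{tr}(\{F,T\}A)=0$ for all $A\in\mathcal{J}$, but that extra generality is not needed for the conclusion.
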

\begin{proof}
By the continuity of the specrum of compact operators, we may suppose
that $\mathcal{J}$ is uniformly closed. Consider the Jordan ideal
$\mathcal{F}$ of finite-rank operators in $\mathcal{J}$. By Theorem
\ref{thm:jor-no-finite-rank}, if $\mathcal{F}$ is zero then $\mathcal{J}$
is reducible; hence we may assume that $\mathcal{F}$ is nonzero.

For $A$ in $\mathcal{J}$ and $F$ in $\mathcal{F}$, $\{A,F\}$
belongs to $\mathcal{F}$, so by Lemma \ref{lem:rig-zer-tr},\[
\operatorname{tr}(\{A,F\}T)=\operatorname{tr}(\{F,T\}A)=0.\]
If $\{F,T\}\ne0$ for some $F$ in $\mathcal{F}$, then $\mathcal{J}$
is reducible by Lemma \ref{lem:jor-tr-idl}. Otherwise, if $\{F,T\}=0$
for all $F$ in $\mathcal{F}$, then\[
\operatorname{tr}(TF)=\operatorname{tr}(\frac{1}{2}\{F,T\})=0\]
for all $F$ in $\mathcal{F}$, which by Lemma \ref{lem:jor-tr-idl}
again implies the reducibility of $\mathcal{J}$.
\end{proof}
Theorem \ref{thm:jor-volterra-ideal} roughly says that a Jordan algebra
of compact operators with too many quasinilpotent operators is triangularizable.
The next result says that a Jordan algebra of compact operators with
too few quasinilpotent operators is also triangularizable. For its
proof, we will require two classical theorems. The Motzkin-Taussky
Theorem states that a linear space of finite-rank diagonalizable operators
is commutative (see for example \cite[Theorem 2.7]{kat}), and the
Kleinecke-Shirokov Theorem states if $A$ and $B$ are bounded operators
on a Banach space, and if $[A,[A,B]]]=0$, then $[A,B]$ is quasinilpotent
(see for example \cite[Theorem 5.1.3]{aup}).

\begin{lem}
\label{thm:jor-contains-qn-elems}Let $\mathcal{J}$ be a uniformly
closed Jordan algebra of compact operators. If $\mathcal{J}$ doesn't
contain any nonzero finite-rank nilpotent elements, then $\mathcal{J}$
is reducible.
\end{lem}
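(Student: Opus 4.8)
The plan is to analyze the Jordan ideal $\mathcal{F}$ of finite-rank operators in $\mathcal{J}$. If $\mathcal{F}=0$, then $\mathcal{J}$ contains no nonzero finite-rank operators and reducibility is immediate from Theorem \ref{thm:jor-no-finite-rank}, so the substantive case is $\mathcal{F}\neq 0$. The first step is to use the hypothesis to pin down the structure of $\mathcal{F}$. For a finite-rank operator $F$, the Jordan--Chevalley decomposition gives $F=S+N$ with $S$ diagonalizable, $N$ nilpotent, $SN=NS$, and both $S$ and $N$ polynomials in $F$. Since $S$ vanishes on the generalized $0$-eigenspace of $F$, the interpolating polynomial for $S$ has zero constant term; as $\mathcal{J}$ is a linear space closed under positive powers, it is closed under polynomials with zero constant term, so both $S$ and $N$ lie in $\mathcal{F}$. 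Because $N$ is a finite-rank nilpotent element of $\mathcal{J}$, the hypothesis forces $N=0$. Thus every element of $\mathcal{F}$ is diagonalizable.

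The second step extracts commutativity and a central finite-rank operator. Since $\mathcal{F}$ is a linear space of finite-rank diagonalizable operators, the Motzkin--Taussky Theorem shows that $\mathcal{F}$ is commutative. Fix $F\in\mathcal{F}$ and $A\in\mathcal{J}$. Because $\mathcal{F}$ is a Jordan ideal, $\{A,F\}=AF+FA\in\mathcal{F}$, and by commutativity $F$ and $\{A,F\}$ commute. Expanding $F(AF+FA)=(AF+FA)F$ and cancelling the $FAF$ terms yields $F^{2}A=AF^{2}$. Hence $F^{2}$ commutes with every element of $\mathcal{J}$.

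The final step produces an invariant subspace. If $F\in\mathcal{F}$ is nonzero then, being diagonalizable, it has a nonzero eigenvalue, so $F^{2}$ is a nonzero finite-rank operator with a nonzero eigenvalue $\mu$. As $F^{2}$ is compact and $\mu\neq 0$, the eigenspace $\mathcal{M}=\ker(F^{2}-\mu)$ is nonzero and finite-dimensional, and since $F^{2}$ commutes with all of $\mathcal{J}$, each $A\in\mathcal{J}$ carries $\mathcal{M}$ into itself, so $\mathcal{M}$ is $\mathcal{J}$-invariant. When $\mathcal{X}$ is infinite-dimensional, $\mathcal{M}$ is automatically proper and $\mathcal{J}$ is reducible. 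When $\mathcal{X}$ is finite-dimensional, every element of $\mathcal{J}=\mathcal{F}$ is diagonalizable, so Motzkin--Taussky makes $\mathcal{J}$ itself commutative; a commuting family of diagonalizable operators has a common eigenvector, again giving a nontrivial invariant subspace.

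I expect the main obstacle to be the first step: justifying that the semisimple and nilpotent parts of a finite-rank operator genuinely lie in $\mathcal{J}$, which rests on choosing the Jordan--Chevalley polynomial with zero constant term and on confirming that $N$ is honestly finite-rank and nilpotent, so that the hypothesis applies to it. The only other delicate point is the finite-dimensional edge case, where $F^{2}$ may be scalar and the eigenspace argument degenerates; this is resolved by applying Motzkin--Taussky to all of $\mathcal{J}$ rather than to $\mathcal{F}$ alone. (The Kleinecke--Shirokov Theorem offers an alternative route, producing a nonzero ideal of quasinilpotent operators and invoking Theorem \ref{thm:jor-volterra-ideal}, but the argument above avoids it.)
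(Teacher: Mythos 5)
Your proof is correct, and while the first half coincides with the paper's, the second half takes a genuinely different and more elementary route. Both arguments begin identically: the hypothesis forces every $F\in\mathcal{F}$ to be diagonalizable (the paper does this by showing the polynomial $F\prod_{\alpha}(\alpha-F)$ is a finite-rank nilpotent element of $\mathcal{J}$, which is essentially your Jordan--Chevalley step; either way one needs the small observation that a finite-rank operator is algebraic, so its semisimple and nilpotent parts are polynomials in it with zero constant term and hence lie in $\mathcal{J}$), and then Motzkin--Taussky gives commutativity of $\mathcal{F}$. From there the paper applies the Kleinecke--Shirokov Theorem twice to deduce $[F,A]^2=0$ for $F\in\mathcal{F}$, $A\in\mathcal{J}$, and then shows, via the identity $((1-P)AP+PA(1-P))^2=-[P,A]^2$ applied to Riesz projections $P$ of arbitrary non-quasinilpotent elements of $\mathcal{J}$, that the range of each such $P$ is invariant. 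Your observation that $F$ commutes with $\{A,F\}$, so that $F^2A=AF^2$ by cancelling the $FAF$ terms, short-circuits all of this: it puts the nonzero finite-rank operator $F^2$ in the commutant of $\mathcal{J}$, and a nonzero eigenspace of $F^2$ is then a closed, finite-dimensional, $\mathcal{J}$-invariant subspace. This avoids Kleinecke--Shirokov entirely and is arguably cleaner; it also sidesteps the case (implicit in the paper's proof) where $\mathcal{J}$ has no non-quasinilpotent element, since you work directly with a nonzero $F\in\mathcal{F}$, which exists once the $\mathcal{F}=0$ case is dispatched by Theorem \ref{thm:jor-no-finite-rank}. What the paper's longer argument buys is a stronger structural conclusion (invariance of the range of every Riesz projection of every element of $\mathcal{J}$), but for the lemma as stated your argument suffices, and your handling of the finite-dimensional degenerate case (where $F^2$ may be scalar) is appropriate.
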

\begin{proof}
Suppose $\mathcal{J}$ doesn't contain any nonzero finite-rank nilpotent
elements, and let $\mathcal{F}$ be the Jordan ideal of finite-rank
elements in $\mathcal{J}$. For $F\in\mathcal{F}$, since $\mathcal{J}$
contains every polynomial in $F$ with zero constant coefficient,
in particular it contains the polynomial\[
p(F)=F\prod_{\alpha}(\alpha-F),\]
where the product is taken over all nonzero $\alpha$ in $\sigma(F)$.
Since some power of $p(F)$ is zero, $p(F)$ is nilpotent, and hence
$p(F)=0$ by hypothesis, which implies that $F$ is diagonalizable.
Hence every element in $\mathcal{F}$ is diagonalizable, so by the
Motzkin-Taussky Theorem, the elements of $\mathcal{F}$ pairwise commute.

Fix some $F\in\mathcal{F}$. For $A$ in $\mathcal{J}$, $[F,[F,A]]$
is of finite rank and belongs to $\mathcal{J}$, since\[
[F,[F,A]]=\{F,\{F,A\}\}-\{A,\{F,F\}\}.\]
From above, we therefore have that $F$ and $[F,[F,A]]$ commute,
i.e. that $[F,[F,[F,A]]]=0$, so the Kleinecke-Shirokov Theorem implies
that $[F,[F,A]]$ is nilpotent. Since, by hypothesis, $\mathcal{J}$
doesn't contain any nonzero finite-rank nilpotent elements, $[F,[F,A]]=0$,
and applying the Kleinecke-Shirokov Theorem again implies that $[F,A]$
is nilpotent. But then $[F,A]^{2}$ is also finite-rank and nilpotent,
and moreover, it belongs to $\mathcal{J}$ since

\[
[F,A]^{2}=\{A,F\}^{2}+2\{A^{2},F^{2}\}-\{A,\{A,F^{2}\}\}-\{F,\{F,A^{2}\}\}.\]
Hence $[F,A]^{2}=0$, where we again use the hypothesis that $\mathcal{J}$
doesn't contain any nonzero finite-rank nilpotent elements.

Let $B\in\mathcal{J}$ with nonzero $\beta\in\sigma(B)$, and let
$P$ be the Riesz projection of $B$ corresponding to $\beta$. Then
$P$ is of finite rank, so from above, for $A\in\mathcal{J}$,\[
0=[P,A]^{2}=(PA)^{2}-PA^{2}P-APA+(AP)^{2}.\]
Now consider $(1-P)AP+PA(1-P)=\{A,P\}-2PAP$, which belongs to $\mathcal{J}$.
We have\begin{eqnarray*}
((1-P)AP+PA(1-P))^{2} & = & (1-P)APA(1-P)+PA(1-P)AP\\
 & = & -((PA)^{2}-PA^{2}P-APA+(AP)^{2})\\
 & = & -[P,A]^{2}\\
 & = & 0,\end{eqnarray*}
which implies that the finite-rank element $(1-P)AP+PA(1-P)$ is nilpotent.
Since by hypothesis, $\mathcal{J}$ doesn't contain any nonzero finite-rank
nilpotent elements, it follows that $(1-P)AP+PA(1-P)=0$ for all $A\in\mathcal{J}$. 

We have\[
0=(1-P)AP+PA(1-P)=AP+PA-2PAP,\]
so $AP=2PAP-PA$. Multiplying on the left by $P$ then gives $PAP=2PAP-PA=PA$,
so it follows that the range of $P$ is invariant under $A$. Since
$P$ was chosen to be nontrivial, this shows that $\mathcal{J}$ is
reducible.
\end{proof}
\begin{thm}
\label{thm:jordan-sublin-tri}A Jordan algebra of compact operators
with subadditive spectrum is triangularizable.
\end{thm}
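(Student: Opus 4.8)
The plan is to follow the same template as the Lie-algebra result, Theorem \ref{thm:lie-sub-tri}, now that the Jordan analogues of the key reducibility lemmas are in place. First I would invoke the Remark that, for a linear space, subadditivity of the spectrum implies sublinearity; hence the Jordan algebra $\mathcal{J}$ has sublinear spectrum. By Lemma \ref{lem:sub-spr-quo} the property of sublinear spectrum is inherited by quotients, so by the Triangularization Lemma it suffices to prove that every Jordan algebra of compact operators with sublinear spectrum is reducible. Passing to the uniform closure (which preserves sublinearity by continuity of the spectrum on compact operators, and which remains a Jordan algebra), I may assume $\mathcal{J}$ is uniformly closed.

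The argument then splits on the presence of finite-rank nilpotents. If $\mathcal{J}$ contains no nonzero finite-rank nilpotent element, then reducibility is immediate from Lemma \ref{thm:jor-contains-qn-elems}. Otherwise there is a nonzero $T\in\mathcal{J}$ that is both nilpotent and of finite rank. Since $T$ is in particular quasinilpotent, $\sigma(T)=\{0\}$, so sublinearity gives $\sigma(A+\lambda T)\subseteq\sigma(A)+\lambda\sigma(T)=\sigma(A)$ for every $A\in\mathcal{J}$ and every $\lambda$; taking spectral radii yields $r(A+\lambda T)\le r(A)$, that is, $\mathcal{J}$ has $T$-stable spectrum. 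Reducibility now follows from Lemma \ref{thm:jor-stable-reducible}.

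The bulk of the work has already been absorbed into the two Jordan-specific reducibility lemmas, Lemma \ref{thm:jor-stable-reducible} (the stable-spectrum case) and Lemma \ref{thm:jor-contains-qn-elems} (the no-finite-rank-nilpotents case), so the theorem itself is a clean assembly. I expect the only points needing care to be in the reduction to reducibility: one must confirm that $\mathcal{J}$ may indeed be replaced by its uniform closure without losing either the Jordan structure or the sublinear-spectrum hypothesis, and that the hypotheses of the Triangularization Lemma are genuinely met by the property \emph{sublinear spectrum} for Jordan algebras. Both are routine given Lemma \ref{lem:sub-spr-quo} and the continuity of the spectrum for compact operators, so no genuine obstacle remains.
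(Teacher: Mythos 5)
Your proof is correct and follows essentially the same route as the paper's: reduce to reducibility via the Triangularization Lemma and Lemma \ref{lem:sub-spr-quo}, then split on the existence of a nonzero finite-rank nilpotent $T$, handling the two cases with Lemma \ref{thm:jor-contains-qn-elems} and Lemma \ref{thm:jor-stable-reducible} respectively. If anything, your case split is phrased more accurately than the paper's own, which states the dichotomy in terms of quasinilpotent elements even though Lemma \ref{thm:jor-contains-qn-elems} concerns finite-rank nilpotents.
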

\begin{proof}
Let $\mathcal{J}$ be a Jordan algebra of compact operators with sublinear
spectrum. By the continuity of the spectrum of compact operators,
we may suppose that $\mathcal{J}$ is uniformly closed. As in Theorem
\ref{thm:lie-sub-tri}, it suffices to show the reducibility of $\mathcal{J}$.
By Theorem \ref{thm:jor-contains-qn-elems}, if $\mathcal{J}$ doesn't
contain any nonzero quasinilpotent elements, then $\mathcal{J}$ is
reducible; hence we may suppose that $\mathcal{J}$ contains a nonzero
quasinilpotent element $T$. By the hypothesis of sublinear spectrum,
$\mathcal{J}$ has $T$-stable spectrum, so the result now follows
from Theorem \ref{thm:jor-stable-reducible}.
\end{proof}
\begin{thm}
A Jordan algebra of compact operators with submultiplicative spectrum
is reducible.
\end{thm}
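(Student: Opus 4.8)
The plan is to mirror exactly the argument used for the Lie-algebra analogue above, substituting the Jordan-specific reducibility results for their Lie counterparts. Let $\mathcal{J}$ be a Jordan algebra of compact operators with submultiplicative spectrum. First I would pass to the uniform closure: since the spectrum of a compact operator varies continuously, both the submultiplicative hypothesis and the reducibility conclusion are unaffected, so we may assume $\mathcal{J}$ is uniformly closed.

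Next I would dichotomize on the presence of finite-rank nilpotents. By Theorem \ref{thm:jor-contains-qn-elems}, if $\mathcal{J}$ contains no nonzero finite-rank nilpotent element, then $\mathcal{J}$ is already reducible and there is nothing left to prove. Hence I may assume there is a nonzero $T\in\mathcal{J}$ which is nilpotent and of finite rank, so that $\sigma(T)=\{0\}$.

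The key step is then a trace computation. Let $\mathcal{F}$ be the Jordan ideal of finite-rank elements of $\mathcal{J}$. For any $F\in\mathcal{F}$, submultiplicativity gives $\sigma(TF)\subseteq\sigma(T)\sigma(F)=\{0\}$, so $TF$ is quasinilpotent; being also of finite rank, its trace equals the sum of its eigenvalues, which is zero, i.e. $\operatorname{tr}(TF)=0$ for every $F\in\mathcal{F}$. Applying Lemma \ref{lem:jor-tr-idl} with the distinguished element $T$ then yields the reducibility of $\mathcal{J}$.

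Because the substantive work has already been carried out in Theorem \ref{thm:jor-contains-qn-elems} and Lemma \ref{lem:jor-tr-idl}, I do not expect a serious obstacle here; the only point requiring care is the trace identity. One must confirm that $\sigma(T)=\{0\}$ forces the product set $\sigma(T)\sigma(F)$ to collapse to $\{0\}$, and that a finite-rank quasinilpotent operator has vanishing trace, so that the hypothesis of Lemma \ref{lem:jor-tr-idl} is genuinely met. I would also note that, in contrast to the sublinear case (Theorem \ref{thm:jordan-sublin-tri}), submultiplicativity is too weak to supply $T$-stable spectrum, which is why the proof must route through the finite-rank-nilpotent dichotomy and the trace condition rather than through Lemma \ref{thm:jor-stable-reducible}.
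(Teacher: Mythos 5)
Your proposal is correct and follows essentially the same route as the paper: pass to the uniform closure, use Theorem \ref{thm:jor-contains-qn-elems} to extract a nonzero finite-rank nilpotent $T$, deduce $\operatorname{tr}(TF)=0$ for all finite-rank $F$ from submultiplicativity, and conclude via the Jordan trace-ideal result. In fact you cite Lemma \ref{lem:jor-tr-idl} where the paper (apparently by a slip) cites the Lie-algebra version, so your reference is the more accurate one.
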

\begin{proof}
Let $\mathcal{J}$ be a Jordan algebra of compact operators with submultiplicative
spectrum. As in the proof of Theorem \ref{thm:jordan-sublin-tri},
we may suppose that $\mathcal{J}$ is uniformly closed. Let $\mathcal{F}$
be the ideal of finite-rank elements in $\mathcal{J}$. By Theorem
\ref{thm:jor-contains-qn-elems}, if $\mathcal{F}$ doesn't contain
a nonzero nilpotent element, then $\mathcal{J}$ is reducible; hence
we may suppose that some nonzero $T$ in $\mathcal{F}$ is nilpotent.
Then for every $F$ in $\mathcal{F}$, $\sigma(FT)=0$ by the hypothesis
of submultiplicative spectrum, so $\operatorname{tr}(FT)=0$. Hence
$\mathcal{J}$ is reducible by Theorem \ref{thm:lie-zero-trace-ideal}. 
\end{proof}
\begin{cor}
Let $\mathcal{J}$ be a uniformly closed Jordan algebra of compact
operators with the property that $\operatorname{tr}(F^{2})=0$ for
every finite-rank element $F$ in $\mathcal{J}$. Then $\mathcal{J}$
is reducible.
\end{cor}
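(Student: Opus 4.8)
The plan is to run exactly the argument used for the Lie-algebra analogue (the preceding theorem on $\operatorname{tr}(F^2)=0$), substituting the Jordan reducibility results of this section for their Lie counterparts. Write $\mathcal{F}$ for the Jordan ideal of finite-rank elements of $\mathcal{J}$. The entire content of the proof is to upgrade the hypothesis, which only concerns squares, into the bilinear statement that $\operatorname{tr}(FG)=0$ for every pair $F,G\in\mathcal{F}$.

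To obtain this I would fix $F,G\in\mathcal{F}$ and observe that $(F+G)^{2}$ is again a finite-rank element of $\mathcal{J}$: since $\mathcal{J}$ is a Jordan algebra it is closed under squaring, and $F+G$ is of finite rank, so $(F+G)^{2}\in\mathcal{F}$. The hypothesis therefore applies to it, giving $\operatorname{tr}((F+G)^{2})=0$. Expanding $(F+G)^{2}=F^{2}+FG+GF+G^{2}$ and using linearity of the trace together with $\operatorname{tr}(F^{2})=\operatorname{tr}(G^{2})=0$ and $\operatorname{tr}(FG)=\operatorname{tr}(GF)$ yields $2\operatorname{tr}(FG)=0$, hence $\operatorname{tr}(FG)=0$.

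With this in hand the conclusion splits into two cases. If $\mathcal{F}=0$, then $\mathcal{J}$ contains no nonzero finite-rank operators and is reducible by Theorem \ref{thm:jor-no-finite-rank}. Otherwise, I would choose any nonzero $A\in\mathcal{F}$; every finite-rank operator $F$ in $\mathcal{J}$ lies in $\mathcal{F}$, so the previous paragraph gives $\operatorname{tr}(AF)=0$ for all such $F$, and Theorem \ref{lem:jor-tr-idl} then yields the reducibility of $\mathcal{J}$.

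I do not expect any genuine obstacle here: the argument is formally identical to the Lie case, and the only point that is specifically Jordan-theoretic is the closure of $\mathcal{F}$ under squaring, which is immediate from the definition of a Jordan algebra. The one place where care is warranted is that the trace-ideal criterion of Theorem \ref{lem:jor-tr-idl} is stated for uniformly closed algebras, so one must invoke it in that setting; but $\mathcal{J}$ is assumed uniformly closed, so this causes no difficulty.
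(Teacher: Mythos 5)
Your proof is correct, but it follows a different route from the paper's. You transplant the Lie-algebra argument: polarize the hypothesis to get $\operatorname{tr}(FG)=0$ for all $F,G$ in the finite-rank ideal $\mathcal{F}$, then split into the cases $\mathcal{F}=0$ (Theorem \ref{thm:jor-no-finite-rank}) and $\mathcal{F}\neq 0$ (Theorem \ref{lem:jor-tr-idl}). The paper instead exploits a Jordan-specific feature: if some finite-rank $F$ had a nonzero eigenvalue $\alpha$, the associated Riesz projection $P$ would lie in $\mathcal{J}$ (being a uniform limit of polynomials in $F$ with zero constant term) and would satisfy $\operatorname{tr}(P^{2})=\operatorname{tr}(P)=\operatorname{rank}(P)\neq 0$, contradicting the hypothesis; hence every element of $\mathcal{F}$ is nilpotent and Theorem \ref{thm:jor-volterra-ideal} applies. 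The paper's route is shorter and yields the stronger intermediate fact that $\mathcal{F}$ consists of nilpotents; yours is more uniform with the Lie case and needs only linearity of the trace plus the trace-ideal criterion, and it handles the degenerate case $\mathcal{F}=0$ explicitly, which the paper's one-line conclusion glosses over. One small slip in your write-up: the hypothesis should be applied to the finite-rank element $F+G$ itself to get $\operatorname{tr}((F+G)^{2})=0$; applying it to the element $(F+G)^{2}$, as your phrasing suggests, would instead give $\operatorname{tr}((F+G)^{4})=0$. The computation you then carry out is the right one, so this does not affect the validity of the argument.
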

\begin{proof}
Let $\mathcal{F}$ be the ideal of finite-rank elements in $\mathcal{J}$.
The hypothesis implies that every $F\in\mathcal{F}$ is nilpotent;
indeed, otherwise some $F\in\mathcal{F}$ would have nonzero $\alpha\in\sigma(F)$,
and the Riesz projection of $F$ corresponding to $\alpha$ would
have $\operatorname{tr}(P^{2})\ne0$, since $\sigma(P)=\{0,1\}$.
Hence the result follows by Theorem \ref{thm:jor-volterra-ideal}.
\end{proof}

\thanks{This research was partially supported by NSERC.}


\begin{thebibliography}{10}
\bibitem{aup} B. Aupetit, A Primer on Spectral Theory, Universitext.
Springer-Verlag, New York, (1991).

\bibitem{hcar}H. Cartan, Theorie du potential newtonien, energie,
capacite, suites de potentiels, Bull. Soc. Math. 73 (1945), 274-289.

\bibitem{gur}R.M. Guralnick, Triangularization of sets of matrices.
Linear and Multilinear Algebra 9 (1980), 133-140.

\bibitem{kat}T. Kato, Perturbation Theory for Linear Operators. Second
edition. Grundlehren der Mathematischen Wissenschaften, Band 132.
Springer-Verlag, Berlin-New York, (1976).

\bibitem{ken1} M. Kennedy, Triangularization of a Jordan algebra
of Schatten operators, Proc. Amer. Math. Soc. 136 (2008), no. 7, 2521-2527.

\bibitem{ken2} M. Kennedy, V. Shulman, Yu. V. Turovskii, Invariant
subspaces of subgraded Lie algebras, Preprint (2007).

\bibitem{lom}V. Lomonosov, Invariant subspaces of operators commuting
with compact operators. Functional Anal. Appl. 7 (1973), 213-214.

\bibitem{mot}T.S. Motzkin, O. Taussky, Pairs of matrices with property
L. Trans. Amer. Math. Soc. 73 (1952), 108-114.

\bibitem{rad1} H. Radjavi, P. Rosenthal, Simultaneous Triangularization.
Universitext. Springer-Verlag, New York, (2000).

\bibitem{rad2}M. Lambrou, W. Longstaff, H. Radjavi, Spectral conditions
and reducibility of operator semigroups. Indiana University Math.
J. 41 (1992), 449-464.

\bibitem{rad3}H. Radjavi, Sublinearity and other spectral conditions
on a semigroup. Canada. J. Math. 52 (2000), no. 1, 197-224.

\bibitem{shu1}V.S. Shulman, Yu. V. Turovskii, Joint spectral radius,
operator semigroups, and a problem of W. Wojtynski. J. Funct. Anal.
177 (2000) 383-441.

\bibitem{shu2} V. Shulman, Yu. V. Turovskii, Invariant subspaces
of operator Lie algebras and Lie algebras with compact adjoint action.
J. Funct. Anal. 223 (2005), no. 2, 425-508.

\bibitem{ves}E. Vesentini, On the subharmonicity of the spectral
radius. Boll. Un. Mat. Ital. 4, 1 (1968), 427-429.

\bibitem{wal}D.B. Wales and H.J. Zassenhaus, On L-Groups. Math. Ann.
198 (1972), 1-12.

\bibitem{woj}W. Wojtyskii, Engel's theorem for nilpotent Lie algebras
of Hilbert-Schmidt operators. Bull. Acad. Polon. Sci. Ser. Sci. Math.
Astronom. Phys. 24 (1976), no. 9, 797-801.

\bibitem{zas}H.J. Zassenhaus, On L-Semigroups. Math. Ann. 198 (1972),
13-22.
\end{thebibliography}
\end{document}